\newcommand{\citep}{\cite}
\newcommand{\citet}{\cite}
\definecolor{menucolor}{rgb}{0.1,0.52,0.47}
\definecolor{urlcolor}{rgb}{0.85,0.37,0.01}
\definecolor{runcolor}{rgb}{0.46,0.44,0.701}
\definecolor{filecolor}{rgb}{0.2,0.5,0.01}
\definecolor{linkcolor}{rgb}{0.12,0.47,0.70}
\definecolor{citecolor}{rgb}{0.55,0.36,0.01}
\definecolor{anchorcolor}{rgb}{0.4,0.4,0.4}
\newcommand\VeryLarge{\@setfontsize\Huge{16}{16}}
\newcommand\smaller{\@setfontsize\small{4}{4}}
\def\subparagraph{} 
\titlespacing*{\section}{0pt}{*1}{*1}
\titlespacing{\subsection}{0pt}{*1}{*1}
\titlespacing{\subsubsection}{0pt}{*1}{*1}
\newtheorem{result}{Result}
\newtheorem{remark}{Remark}
\newtheorem{proposition}{Proposition}
\newtheorem{corollary}{Corollary}
\newtheorem{lemma}{Lemma}
\titleformat{\paragraph}[runin]{\normalfont\bfseries}{\theparagraph}{0.5em}{}[:]
\renewenvironment{proof}[1][\proofname]{\par
  \pushQED{\qed}%
  \normalfont
  \topsep2pt \partopsep2pt 
  \trivlist
  \item[\hskip\labelsep
        \itshape
    #1\@addpunct{.}]\ignorespaces
}{%
  \popQED\endtrivlist\@endpefalse
  \addvspace{1pt plus 1pt} 
}
\newtheorem{theorem}{Claim}
\begin{document}
\title{
  \vspace{-0.2in}
  \VeryLarge The envelope of a complex Gaussian random variable}
\author{
  \vspace{-0.12in}
  Sattwik Ghosal\thanks{S.Ghosal is with the Department of Biostatistics, University of Michigan, Ann Arbor, Michigan, USA.} and Ranjan Maitra\thanks{R. Maitra is with the Department of Statistics, Iowa State University, Ames, Iowa, USA.}
\\ \vspace{-0.4in}
\thanks{A submission based on this manuscript won the first author a Student Poster award in Theoretical Statistics and Probability at the 2023 International Indian Statistical Association conference held at the Colorado School of Mines in Golden, CO, USA.}
\thanks{{\bf Abbreviations:} CAD, $\chi$-affine distribution; CDF, cumulative distribution function; IQC, identical quadratures components; MGF, moment generating function; MOM, method of moments; MRI, magnetic resonance imaging; PDF, probability density function; SNR, signal-to-noise ratio.}
}
\maketitle
\vspace{-0.2in}
\begin{abstract}
The envelope of an elliptical Gaussian complex vector, or equivalently, the amplitude or  norm of a bivariate normal random vector has application in many weather and signal processing contexts. We explicitly characterize its distribution in the general case through its probability density, cumulative distribution and moment generating function.  Moments and limiting distributions are also derived. These derivations are exploited to also characterize the special cases where the bivariate Gaussian mean vector and covariance matrix have a simpler structure, providing new additional insights in many cases. Simulations illustrate the benefits of using our formulae over Monte Carlo methods. We also use our derivations to get a better initial characterization of the distribution of the observed values in structural Magnetic Resonance Imaging datasets, and of wind speed.

\end{abstract}
\vspace{-0.1in}
\begin{IEEEkeywords}
associated Laguerre polynomial; Beckmann distribution; Bell polynomial; confluent hypergeometric function; cumulant; Fa\`a di Bruno formula;  generalized Beckmann distribution;  generalized Marcum function; Hoyt distribution; hypergeometric function; Kamp\'e~de~F\'eriet function; identical quadrature components model; 
  radar communications; magnitude magnetic resonance images; $\chi$-affine distribution
\end{IEEEkeywords}

\section{Introduction}
The envelope, amplitude, or norm, of a complex Gaussian
random variable has applications in engineering and scientific
disciplines such as meteorology~\citep{bestetal10,bailetal11,baggioandmuzy24}, signal processing, radar and other communications systems~\citep{beckmann67,beckmannandspizzichino63,simonetal95,proakis01,shietal05}, 
position localization~\citep{barshalometal01} and 
navigation~\citep{arafaandmessier10}, or nitude resonance~\citep{maitraandfaden09,maitra13} or diffusion weighted imaging~\citep{wegmannetal17}. It 
is defined to be $R{=}\sqrt{X_1^2{+}X_2^2}$, for the complex gain $X_1{+}\bi X_2$, where
$\bi{=}\sqrt{-1}$ and $(X_1,X_2)$ have a bivariate normal distribution
$\mN_2(\bmu,\bSigma)$ with
bivariate mean vector $\bmu$ and $2{\times}2$ dispersion matrix $\bSigma$.

Various cases of the envelope distribution have received special
names, definitions and treatments.  For instance, $R$
is said to have the Beckmann
distribution~\citep{beckmann67,beckmannandspizzichino63} when
$\bSigma$ is a diagonal matrix. This distribution reduces to the 
Hoyt/Nakagami-$q$ distribution~\citep{hoyt47,paris09,hajrietal09} when
additionally $\bmu{=}\bzero$. On the other hand, we get the Rice
distribution~\citep{rice44,rice45} when $\bmu{\neq}\bzero$ but
$\bSigma{\propto}\bI_2$, the identity 
matrix. For $\bmu{=}\bzero$ and $\bSigma{\propto}\bI_2$, $R$ has the
Rayleigh distribution~\citep{rayleigh1919}. In the most general
scenario, $R$ is a special case of the generalized Beckmann
distribution~\citep{zhuetal18} that is the distribution of the
Euclidean norm of a $p$-variate Gaussian random vector\footnote{Some
  authors call the generalized Beckmann distribution the generalized
  Rice distribution, but we feel the generalized Rice distribution is more ambiguous because a
Rice-distributed random variable is the envelope of a bivariate
Gaussian random vector with a spherical dispersion matrix. For greater
clarity, we feel that the generalized Beckmann distribution should be
used to refer to the distribution of the Euclidean norm of a
Gaussian random vector in its most general formulation.}. To fix
context, we call the $p{=}2$ case the second order generalized
Beckmann distribution. 

Complementing the envelope of a complex Gaussian random
variable is its phase that also has applications in signal
processing~\citep{pawulaetal82,pawula01,vitthaladevuniandalouini05}
and other
areas~\citep{jammalamadakaandsengupta01,mardiaandjupp00}. The  phase
distribution has received attention in the
communications literature~\citep{dharmawansaetal09}, and more extensively,
in
statistics~\citep{gattoandjammalamadaka07,gatto08,gatto09}, but the
same is not true for the envelope. Only the Rayleigh, Rice and Hoyt/Nakagami-$q$ distributions 
have been well-characterized or studied \citep{rice45,bozchalooiandliang07,hajrietal09,coluccia13,romeroetal15}, while only the expressions for the density~\citep{beckmann67} and the expected signal-to-noise ratio (SNR)~\citep{penaetal17}, but not the moments, have been derived for the Beckmann distribution. Similar characterizations do not exist for the second order generalized Beckmann distribution with general $\bSigma$, with its exact probability density function (PDF) as the only property that has been derived~\citep{aaloetal07,dharmawansaetal09}.
From an estimation perspective, the Beckmann is indistinguishable from the generalized Beckmann distribution or from the second order Identical Quadrature Components (IQC) model, which arises for the case when $\bSigma$ has identical diagonal elements. Nevertheless, there are situations~\citep{aaloetal07,dharmawansaetal09} where nonhomogeneous 
receiver quadrature error or I-Q gain mismatch, and correlated
Gaussian noise~\citep{parkandcho05} can yield envelopes from this
distribution, and  it is of importance to characterize its
properties. Therefore, in this paper, we explicitly derive, in 
Section~\ref{sec:main}, the cumulative distribution function (CDF) of
the second order generalized Beckmann distribution. For general $p$,
\citet{zhuetal18} provided upper and lower bounds for the CDF of the
generalized Beckmann distribution, but we provide explicit
representations of the CDF. We also use this opportunity to lay out a 
detailed derivation of the PDF to supplement the sketch provided of a
similar specification~\citep{dharmawansaetal09}, and that itself is of
an alternative form to the one in~\citep{aaloetal07}. We next provide the
moment generating function~(MGF) $\M_R(t)$ of the second order generalized
Beckmann distribution, after showing that it exists for any finite
$t\in\R$. Formulae for the raw moments, and limiting distributions for the generalized Beckmann distribution are also provided. 
The derived formulae for the PDF, CDF,  MGF, and the moments are then
applied in Section~\ref{sec:special} in the case of the
specialized envelope distributions, namely, the Rayleigh,  Rice,
Beckmann and Hoyt/Nakagami-$q$ distributions. In some cases, our
formulae match existing formulae obtained through other means, while
in other cases, our methods provide properties of these distributions
not hitherto derived. We also provide a deeper study of the second order IQC distribution. 
Section~\ref{sec:simulations} evaluates performance of our moments formulae vis-a-vis Monte Carlo methods, both in terms of computational speed and accuracy.  We see that our exact formulae are generally accurate at a small fraction of computational cost, but there are situations where numerical issues arise in calculating our exact formulae and then formulae obtained from the limiting distributions are a better option. Section~\ref{sec:applications} uses our derivations in Sections~\ref{sec:main} and \ref{sec:special} to obtain Method-of-Moments (MOM) estimators for the IQC model parameters for magnitude Magnetic Resonance Imaging (MRI) and wind speed data. Our analysis provides evidence of the IQC model being a better fit to the data than its Rice counterpart. We conclude with some discussion. Our article also has appendices containing some needed and, in some cases, more general technical details. 

\section{Main Results}
\label{sec:main}
\subsection{Background and Preliminaries}
\label{sec:fns}
The PDF of the second order generalized Beckmann distribution has been
specified in two alternative ways~\citep{aaloetal07,dharmawansaetal09}. Our
version is similar to that in~\citet{dharmawansaetal09} who only
provided a very terse sketch, so we use this opportunity to rewrite
the PDF and provide a formal proof for a fuller reference. 
\begin{result}
  \label{theo:2d.PDF}
Let $\bX {=} (X_1,X_2){\sim} \mN_2(\bmu,\bSigma)$, where $\bmu {=}
(\mu_1,\mu_2)^\top$, and $\bSigma$ has diagonal elements $\sigma_1^2$
and $\sigma_2^2$ and off-diagonal element $\rho\sigma_1\sigma_2$. The
PDF of $R{=}\sqrt{X_1^2{+}X_2^2}$ is
\begin{equation}
  \label{eq:2d.PDF}
  \begin{split}
    f\!_R&(r;\bmu, \bSigma){=}\alpha r \exp{({-}\beta r^2)}\Big\{\!\!\sum_{j=0}^{\infty} \!\epsilon_j\I_{2j}(\psi r )\I_{j}(\eta r^2)\cos
    2j \delta\Big\}\1[r{>}0],
\end{split}
    \end{equation}
  where $\1(\cdot)$ is the indicator
  function, $\epsilon_j{=}2^{\1[j>0]}$, $\I_{m}(.)$ is the modified
  Bessel function of the first kind
  of the $m$th order,  $\delta {=}
  (\tilde{\phi}{-}\phi/2)$, where $\tilde\phi$ and $\phi$ satisfy
  \begin{align*}
	  \cos\tilde\phi & = \frac{\mu_2\sigma^2_1{-}\rho\mu_1\sigma_2\sigma_1}{\sqrt{\mu_1^2\sigma_2^4 +\mu_2^2\sigma_1^4+\rho^2\sigma_1^2\sigma_2^2(\mu_1^2+\mu_2^2)-2\rho\mu_1\mu_2\sigma_1\sigma_2(\sigma_1^2+\sigma_2^2)}},\\
	\sin\tilde\phi & = \frac{\mu_1\sigma^2_2{-}\rho\mu_2\sigma_2\sigma_1}{\sqrt{\mu_1^2\sigma_2^4 +\mu_2^2\sigma_1^4+\rho^2\sigma_1^2\sigma_2^2(\mu_1^2+\mu_2^2)-2\rho\mu_1\mu_2\sigma_1\sigma_2(\sigma_1^2+\sigma_2^2)}},\\
	\cos\phi & = \frac{\sigma_1^2 -\sigma_2^2}{\sqrt{(\sigma_1^2 -\sigma_2^2)^2 + 4\rho^2\sigma_1^2\sigma_2^2}},\qquad 	\sin\phi =  \frac{2\rho\sigma_1\sigma_2}{\sqrt{(\sigma_1^2 -\sigma_2^2)^2 + 4\rho^2\sigma_1^2\sigma_2^2}},
\end{align*}
and
\begin{align*}
    \alpha & {=} \frac1{\sigma_1\sigma_2\sqrt{1{-}\rho^2}}
             \exp{\left({-}\frac{\mu^2_1\sigma_2^2
             {+} \sigma_1^2\mu_2^2 - 2\rho\sigma_1\sigma_2\mu_1\mu_2}{2
             \sigma^2_1\sigma_2^2(1{-}\rho^2)}\right)}>0, \qquad
    \beta   {=} \frac{\sigma_1^2 {+}
             \sigma_2^2}{4
             \sigma^2_1\sigma_2^2(1{-}\rho^2)}
    > 0, \\
    \eta &
    {=}\frac{\sqrt{(\sigma^2_2{-}\sigma^2_1)^2{+}4\sigma^2_1\sigma^2_2\rho^2}}{4\sigma^2_1\sigma^2_2(1{-}\rho^2)}
      \geq 0, \qquad
 \psi {=}
    \frac{\sqrt{\sigma^2_1(\rho\mu_1\sigma_2{-}\mu_2\sigma_1)^2{+}\sigma^2_2(\rho\mu_2\sigma_1{-}\mu_1\sigma_2)^2}}{\sigma^2_1\sigma^2_2(1{-}\rho^2)}
         \geq 0,
  \end{align*}
\end{result}
\begin{proof}
  See Appendix~\ref{proofPDF}.
\end{proof}
\begin{remark}
  D. Paindaveine has pointed out a potential alternative indirect way
  to finding the PDF (and the 
  CDF) of a generalized Beckman distribution. We can write the squared
  Euclidean norm of a complex Gaussian random variable as a scaled
  sum of independent non-central chi-squared random variables, and
  then use Theorem 4.2b.1 of 
  \citet{mathaiandprovost92} to obtain the PDF and the CDF of the squared
  Euclidean norm, and from there, the PDF and CDF of the
  envelope. However, Theorem 4.2b.1 of 
  \citet{mathaiandprovost92} represents the PDF and the CDF as an
  alternating series, which, because of their alternating signs, are
  plagued by slow convergence~\citep{lange10}. Our direct derivations
  here are somewhat less concerning because the only term involving potentially involving a negative sign is the term involving the cosine, but from 8.334 of~\citet{gradshteynandryzhik00}, we know that 
  $$\cos 2j\delta = \frac\pi{\Gamma\left(\frac12 + \frac{2j\delta}{\pi}\right) \Gamma\left(\frac12 - \frac{2j\delta}{\pi}\right)},$$
  but the gamma function $\Gamma(x)$ is negative only when $x\in (-2k-1,-2k)$ for any  $k\in\mathbb Z$. This means that $\cos2j\delta$ has negative contributions for a given $\delta$ when $\frac{(4k+1)\pi}{4\delta}{<} j {<} \frac{(4k+3)\pi}{4\delta}$, and depending on the value of $\delta$, may not alternate for sets of successive $j$s.
\end{remark}
\subsection{Additional characterization of the second order Beckman distribution}

\subsubsection{The cumulative distribution function}
\label{sec:cdf}
We now provide explicit forms of the CDF of a generalized Beckmann
random variable of second order.
\begin{theorem}
  \label{theo:2d.CDF} Let 
  $\ell_{k_1,k_2}^{(j)}{\doteq}k_1{+}2k_2{+}2j$, 
  while $(n)_{(l)}{\doteq}
  n!/(n{-}l)!$ denotes
  a falling factorial~\citep[Page 48]{grahametal94}, and 
  \begin{align*}
    C(k_1,k_2,j;\psi,\eta) & {=} \frac{\psi^{2k_1}\eta^{2k_2}}{4^{k_1+k_2}k_1!(k_1+2j)!k_2!(k_2+j)!},\\    
    T_1(k_1,k_2,j;\psi,\eta)&
    {=}C(k_1,k_2,j;\psi,\eta)
          \frac{\ell_{k_1,k_2}^{(j)}!}{\beta^{\ell_{k_1,k_2}^{(j)}}},
          \mbox{ and }\\
    T_2(u,k_1,k_2,j,k;\psi,\eta)& {=}
C(k_1,k_2,j;\psi,\eta) \frac{(\ell_{k_1,k_2}^{(j)})_{(k{-}1)}}{\beta^{k{-}1}}u^{2\ell_{k_1,k_2}^{(j)}{+}2{-}2k}.
  \end{align*}
  Under the definitions and setting of  Result~\ref{theo:2d.PDF},
  $R$ has  CDF
  \begin{equation}
    \begin{split}
      F_{R}(u;\bmu,\bSigma)       {=}\frac{\alpha}{2\beta}
\sum_{j{=}0}^{\infty}\epsilon_j\!\!
       \left(\frac{\eta\psi^2}8\right)^j&\!\!\!\cos2j\delta
       \Bigg\{\sum_{k_1{=}0}^{\infty}\sum_{k_2{=}0}^{\infty}
                             T_1(k_1,k_2,j;\psi,\eta){-}\exp{({-}\beta u^2)}\sum_{k_1{=}0}^{\infty}\!\sum_{k_2{=}0}^{\infty}\!\!\!\!\sum_{k{=}1}^{\ell_{{k_1,k_2}}^{(j)}{+}1}\!\!\!\!\!T_2(u,k_1,k_2,j,k;\psi,\eta)\Bigg\},\\
    \end{split}
    \label{eq:CDF}
  \end{equation}
for $u > 0$, and is zero for $u{\leq} 0$.
\end{theorem}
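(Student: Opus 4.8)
The plan is to integrate the density~(\ref{eq:2d.PDF}) term by term after replacing its two Bessel factors by their defining power series. First I would use that $\I_{2j}({-}\psi r){=}\I_{2j}(\psi r)$, since $\I_{2j}(z){=}\sum_{k\geq0}(z/2)^{2k+2j}/[k!(k{+}2j)!]$ contains only even powers of $z$, and expand also $\I_{j}(\eta r^2){=}\sum_{k\geq0}(\eta r^2/2)^{2k+j}/[k!(k{+}j)!]$. Multiplying the two series by the prefactor $\alpha r$, a generic monomial carries $r$ to the power $2k_1{+}2j{+}2(2k_2{+}j){+}1{=}2\ell_{k_1,k_2}^{(j)}{+}1$, and collecting the powers of $2$, $\psi$ and $\eta$ one checks that its coefficient equals $\alpha\,\epsilon_j(\eta\psi^2/8)^j\cos2j\delta\;C(k_1,k_2,j;\psi,\eta)$. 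Thus
\[
 f\!_R(r;\bmu,\bSigma){=}\alpha\sum_{j=0}^{\infty}\epsilon_j\Bigl(\frac{\eta\psi^2}{8}\Bigr)^{\!j}\cos 2j\delta\sum_{k_1=0}^{\infty}\sum_{k_2=0}^{\infty}C(k_1,k_2,j;\psi,\eta)\,r^{2\ell_{k_1,k_2}^{(j)}+1}e^{-\beta r^2}\1[r{>}0].
\]

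Next I would integrate this over $(0,u]$. For fixed $j,k_1,k_2$, writing $\ell{=}\ell_{k_1,k_2}^{(j)}\in\N$ and substituting $s{=}r^2$, one has $\int_0^u r^{2\ell+1}e^{-\beta r^2}\,dr{=}\tfrac12\int_0^{u^2}s^\ell e^{-\beta s}\,ds$, and the standard finite expansion of this lower incomplete gamma integral, $\int_0^{x}s^\ell e^{-\beta s}\,ds{=}\ell!\,\beta^{-\ell-1}\bigl(1{-}e^{-\beta x}\sum_{k=0}^{\ell}(\beta x)^k/k!\bigr)$, taken at $x{=}u^2$, after the reindexing $k\mapsto\ell{-}k{+}1$ and the identity $\ell!/(\ell{-}k{+}1)!{=}(\ell)_{(k-1)}$, gives $\tfrac1{2\beta}\bigl[\ell!/\beta^{\ell}{-}e^{-\beta u^2}\sum_{k=1}^{\ell+1}(\ell)_{(k-1)}\beta^{-(k-1)}u^{2\ell+2-2k}\bigr]$. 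Multiplying through by $C(k_1,k_2,j;\psi,\eta)$ reproduces exactly $\tfrac1{2\beta}\bigl[T_1(k_1,k_2,j;\psi,\eta){-}e^{-\beta u^2}\sum_{k=1}^{\ell_{k_1,k_2}^{(j)}+1}T_2(u,k_1,k_2,j,k;\psi,\eta)\bigr]$ by the definitions of $T_1$ and $T_2$; summing over $j,k_1,k_2$ yields~(\ref{eq:CDF}) for $u{>}0$, and $F_R(u){=}0$ for $u{\leq}0$ is immediate from the indicator.

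The one step that needs genuine care is the interchange of $\int_0^u$ with the triple series. Because $\psi,\eta{\geq}0$ every summand above is nonnegative, and summing absolute values over all $j,k_1,k_2$ recovers $\alpha r\,e^{-\beta r^2}\sum_{j\geq0}\epsilon_j\I_{2j}(\psi r)\I_{j}(\eta r^2)$; the elementary bound $\I_{\nu}(z)\leq(z/2)^{\nu}e^{z^2/4}/\nu!$ for $z\geq0$ shows that on $[0,u]$ this is dominated by a constant times $\sum_{j\geq0}(\psi^2\eta u^4/8)^j/[(2j)!\,j!]{<}\infty$, so the integrand is absolutely integrable over the product of $(0,u]$ with the triple counting measure and Fubini's theorem licenses the termwise integration. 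I expect this convergence bookkeeping --- together with the (minor) check that the upper limit $k{=}\ell_{k_1,k_2}^{(j)}{+}1$ in the definition of $T_2$ is exactly the range produced by the incomplete-gamma expansion --- to be the only real obstacle; the remaining substitution and re-indexing are routine.
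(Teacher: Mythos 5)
Your proposal is correct and follows essentially the same route as the paper: expand the product $\I_{2j}({-}\psi r)\I_j(\eta r^2)$ into the double power series with coefficients $C(k_1,k_2,j;\psi,\eta)$ and integrate $r^{2\ell^{(j)}_{k_1,k_2}+1}e^{-\beta r^2}$ termwise, your closed form for the lower incomplete gamma integral being exactly the paper's formula \eqref{eq:delta.odd} for $\Delta^u_n$ with odd $n$ obtained there by recursive integration by parts. Your explicit Fubini/domination argument for interchanging the triple series with the integral is a welcome addition that the paper leaves implicit.
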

\begin{proof} By definition,
$
F_{R}(u;\bmu,\bSigma){=}\int_{0}^{u}\!\!\! f_{R}(r;\bmu,\bSigma)dr$, where
$f_{R}(r;\bmu,\bSigma)$ is as in~\eqref{eq:2d.PDF}.
Define $${\Delta}^{u}_{n}=\int_{0}^{u}x^n \exp{({-}\beta x^2)}dx.$$
Integrating by parts yields the recursive relation  
\begin{align*}
\Delta^{u}_n{=}\int_{0}^{u}\!\!\!x^n
  \exp{({{-}\beta}x^2)}dx
                          {=}{-}\frac{u^{n{-}1}\exp{({-}\beta  u^2)}}{2\beta}{+}\frac{n{-}1}{2\beta}\Delta^{u}_{n{-}2}.
\end{align*}
Then, with $i_{[k]} {\doteq} i(i {-} 2)(i {-} 4)\hdots(i {-} 2k {+}  2)$, for even $n$, 
\begin{equation} 
  \begin{split}
  \Delta^{u}_n=&{-}\frac{1}{2\beta}\exp{({-}{\beta}u^2)}\sum_{k{=}1}^{\frac{n}{2}}\frac{u^{n{+}1{-}2k}}{(2\beta)^{k{-}1}}(n{-}1)_{[k{-}1]}{+}\frac{(n{-}1)_{[\frac{n}{2}{-}1]}}{(2\beta)^{\frac{n}{2}}}\sqrt{\frac{\pi}{\beta}}[\Phi(u\sqrt{2\beta}){-}1/2],\\
=&-\frac{1}{2\beta}\exp{({-}{\beta}u^2)}\sum_{k=1}^{\frac{n}{2}}\frac{u^{n+1-2k}}{(2\beta)^{k-1}}(n-1)_{[k-1]}
  {+}\frac{\Gamma\left(\frac{n+1}{2}\right)}{2\beta^{\frac{n+1}{2}}}[\Phi(u\sqrt{2\beta})-1/2]
  \end{split} \label{eq:delta.even}
  \end{equation}
while,
for odd $n$, 
\begin{align}
  \label{eq:delta.odd}
    \Delta^{u}_n=&\frac{\left(\frac{n{-}1}{2}\right)!}{2\beta^{\frac{n{+}1}{2}}}{-}\frac{1}{2\beta}\exp{({-}\beta
    u^2)}\sum_{k{=}1}^{\frac{n{+}1}{2}}\frac{u^{n{+}1{-}2k}}{\beta^{k{-}1}}\left(\frac{n{-}1}{2}\right)_{(k{-}1)}   =\frac{\Gamma\left(\frac{n{+}1}{2}\right)}{2\beta^{\frac{n{+}1}{2}}}{-}\frac{1}{2\beta}\exp{({-}\beta
    u^2)}\sum_{k{=}1}^{\frac{n{+}1}{2}}\frac{u^{n{+}1{-}2k}}{\beta^{k{-}1}}\left(\frac{n{-}1}{2}\right)_{(k{-}1)} 
\end{align}
 For any integer
$j{\geq}0$,
$\I_{j}(z){=}\left(\frac{z}{2}\right)^j\sum_{k=0}^{\infty}\frac{z^{2k}}{4^{k}k!\,\Gamma(k+j+1)}.$
So
\begin{equation}
  \begin{split}
\I_{2j}(\psi r)\I_{j}(\eta r^2)
&=\left(\frac{1}{8}\eta\psi^2 r^4\right)^j\sum_{k_1=0}^{\infty}\sum_{k_2=0}^{\infty}\frac{(\frac{1}{4}\psi^2 r^2)^{k_1}(\frac{1}{4}\eta^2 r^4)^{k_2}}{k_1!\,k_2!\,\Gamma(k_1{+}2j{+}1)\Gamma(k_2{+}j{+}1)}
                =\left(\frac{1}{8}\eta\psi^2
               \right)^j\sum_{k_1=0}^{\infty}\sum_{k_2=0}^{\infty}
                  C(k_1,k_2,j;\psi,\eta) r^{2k_1+4k_2+4j}\\
  \end{split}
  \label{eq:prod.Bessel}
\end{equation}
which, upon combining with \eqref{eq:delta.odd}, yields
\begin{equation}
\begin{split}
\int_{0}^{u}&\!\!\alpha r \exp{(-\beta r^2)}\I_{2j}(\psi r)\I_{j}(\eta r^2)dr\\
  &{=} \sum_{k_1=0}^{\infty}\sum_{k_2=0}^{\infty}\alpha
     \left(\frac{1}{8}\eta\psi^2 \right)^j C(k_1,k_2,j;\psi,\eta)
    \Delta^{u}_{2\ell_{k_1,k_2}^{(j)}+1}\\ &{=}\frac{\alpha}{2\beta}\left(\frac{1}{8}\eta\psi^2
  \right)^j \Big\{\sum_{k_1=0}^{\infty}\sum_{k_2=0}^{\infty}
  T_1(k_1,k_2,j;\psi,\eta)
  {-}\exp{(-\beta
  u^2)}\sum_{k_1=0}^{\infty}\sum_{k_2=0}^{\infty}\!\!\!\!\sum_{k{=}1}^{\ell_{{k_1,k_2}}^{(j)}{+}1}\!\!\!\!\!T_2(u,k_1,k_2,j,k;\psi,\eta)\Big\},
\end{split}
\label{eq:insum}
\end{equation}
from where we get \eqref{eq:CDF}, after multiplying each term with
$\epsilon_j\cos 2j\delta$ and summing over $j\in\{0,1,2\hdots\}$. 
\end{proof}

\begin{proposition}
  \label{theo:altCDF}
Under the framework of Theorem~\ref{theo:2d.CDF}, the CDF of $R$ is equivalently,
\begin{equation}
  \begin{split}
    F_{R}(u;\bmu,\bSigma)
           {=}\frac{\alpha}{2\beta}\sum_{j=0}^{\infty}
           \epsilon_{j}cos2j\delta\left(\frac{\eta
           \psi^2}{8}\right)^{j}
           \sum_{k_1=0}^{\infty}\!\sum_{k_2=0}^{\infty}&\!C(k_1,k_2,j,\psi,\eta)  {\frac{ u^{2\ell^{(j)}_{k_1,k_2}{+}2}}{\ell^{(j)}_{k_1,k_2}{+}1}{}
    _{1}\F_{1}\left(\ell^{(j)}_{k_1,k_2}{+}1,\ell^{(j)}_{k_1,k_2}{+}2,-\beta u^2\right)}
  \end{split}
    \label{eq:alt.CDF}
  \end{equation}
for $u > 0$ and zero everywhere else. Here, ${}_1\F_1(\cdot,\cdot,\cdot)$ is the confluent hypergeometric
function, or Kummer's function, of the first kind~\citep{kummer1837,abramowitzandstegun64}.
\end{proposition}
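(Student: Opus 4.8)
The plan is to reuse the proof of Theorem~\ref{theo:2d.CDF} essentially verbatim, changing only the way the elementary integral $\Delta^u_n$ is evaluated. Recall that by \eqref{eq:prod.Bessel} the $j$-th contribution to $F_R(u;\bmu,\bSigma)$ is $\epsilon_j\cos 2j\delta$ times
$$\int_0^u\!\!\alpha r\exp(-\beta r^2)\I_{2j}(-\psi r)\I_j(\eta r^2)\,dr=\alpha\Big(\tfrac18\eta\psi^2\Big)^{j}\sum_{k_1=0}^{\infty}\sum_{k_2=0}^{\infty} C(k_1,k_2,j;\psi,\eta)\,\Delta^u_{2\ell_{k_1,k_2}^{(j)}+1},$$
exactly as displayed in \eqref{eq:insum}. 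In Theorem~\ref{theo:2d.CDF} the odd-index integral $\Delta^u_{2\ell+1}$ was expanded through the terminating recursion \eqref{eq:delta.odd}, producing the $T_1$ and $T_2$ pieces; here I would instead keep it in a single closed form coming from the incomplete gamma function.

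First I would substitute $t=\beta x^2$ in $\Delta^u_n=\int_0^u x^n\exp(-\beta x^2)\,dx$ to obtain $\Delta^u_n=\tfrac12\beta^{-(n+1)/2}\gamma\big(\tfrac{n+1}{2},\beta u^2\big)$, where $\gamma(a,x)=\int_0^x t^{a-1}e^{-t}\,dt$ is the lower incomplete gamma function; in particular $\Delta^u_{2\ell+1}=\tfrac1{2}\beta^{-(\ell+1)}\gamma(\ell+1,\beta u^2)$ for the index $\ell=\ell_{k_1,k_2}^{(j)}$ (a nonnegative integer). Expanding $e^{-t}$ in its power series and integrating term by term, and using $(a)_n/(a+1)_n=a/(a+n)$, gives Kummer's representation $\gamma(a,x)=a^{-1}x^a\,{}_1F_1(a;a+1;-x)$ \cite{kummer1837,abramowitzandstegun64}, whence
$$\Delta^u_{2\ell+1}=\frac{u^{2\ell+2}}{2(\ell+1)}\,{}_1F_1\big(\ell+1;\ell+2;-\beta u^2\big).$$
Substituting this back into the integral above, multiplying by $\epsilon_j\cos 2j\delta$, and summing over $j\in\{0,1,2,\dots\}$ — the same final step as in the proof of Theorem~\ref{theo:2d.CDF} — reproduces \eqref{eq:alt.CDF}, with the prefactor $\alpha/2$ and the argument $-\beta u^2$ of ${}_1F_1$ appearing automatically.

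The interchange of $\int_0^u$ with the double sum over $(k_1,k_2)$ and with the outer sum over $j$ is justified by the same absolute/uniform convergence considerations already invoked for Theorem~\ref{theo:2d.CDF} (the PDF series \eqref{eq:2d.PDF} converges uniformly on the compact interval $[0,u]$), so I would simply cite that argument rather than repeat it. I expect the only real — and fairly minor — obstacle to be a consistency check rather than anything analytic: one should verify that the everywhere-convergent series $\tfrac{\beta u^{2\ell+2}}{\ell+1}\,{}_1F_1(\ell+1;\ell+2;-\beta u^2)$ and the terminating expression $\tfrac{\ell!}{\beta^\ell}-\exp(-\beta u^2)\sum_{k=1}^{\ell+1}(\ell)_{(k-1)}\beta^{-(k-1)}u^{2\ell+2-2k}$ implicit in \eqref{eq:CDF} via \eqref{eq:delta.odd} are two standard representations of the same quantity $2\beta\Delta^u_{2\ell+1}=\beta^{-\ell}\gamma(\ell+1,\beta u^2)$ for integer $\ell\ge0$, so that Proposition~\ref{theo:altCDF} and Theorem~\ref{theo:2d.CDF} are manifestly equivalent. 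Beyond tracking the falling factorials $(\ell)_{(k-1)}$ and the powers of $\beta$, no new input is needed.
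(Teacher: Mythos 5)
Your proof matches the paper's own argument essentially verbatim: both reduce $\Delta^u_{2\ell+1}$ to the lower incomplete gamma function and then apply Kummer's identity $\gamma(a,x)=a^{-1}x^a\,{}_1F_1(a,a+1,-x)$ to obtain the closed form, before reassembling the series exactly as in the proof of Theorem~\ref{theo:2d.CDF}. (Incidentally, your substitution $\Delta^u_n=\tfrac12\beta^{-(n+1)/2}\gamma\bigl(\tfrac{n+1}{2},\beta u^2\bigr)$ is the corrected form of the identity, which the paper prints with a small typo in the argument and exponent, though it reaches the same final expression.)
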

\begin{proof}
	Note $\Delta_n^u{=} \frac{\gamma\left(\frac{n+1}2,\beta u^2\right)}{2\beta^{\frac{n+1}2}},$ with
$\gamma(a,x){=}\int_0^xt^{a-1}\exp{(-t)}dt$, the lower incomplete
gamma function. Also, from (13.6.10) of~\citet{abramowitzandstegun64},
$\gamma(a,x){=} a^{-1}x^a{}_1\F_1(a,a+1,-x)$
. Then, 
$$
\Delta^{u}_{2\ell_{k_1,k_2}^{(j)}{+}1} = \frac{ u^{2l^{(j)}_{k_1,k_2}{+}2}}{2(\ell^{(j)}_{k_1,k_2}{+}1)}{}_{1}\F_{1}\left(\ell^{(j)}_{k_1,k_2}{+}1,\ell^{(j)}_{k_1,k_2}{+}2,-\beta u^2\right).
$$
The first line in the right hand side of \eqref{eq:insum} is also
expressed as
\begin{equation}
  \begin{split}
\int_{0}^{u}&\!\!\alpha r \exp{(-\beta r^2)}\I_{2j}(\psi
                                             r)\I_{j}(\eta r^2)dr 
    {=} \frac{\alpha}{2\beta}\left(\!\frac{\eta
    \psi^2}{8}\!\right)^{j}\!\!\sum_{k_1=0}^{\infty}\sum_{k_2=0}^{\infty}
    C(k_1,k_2,j,\psi,\eta)\frac{
      u^{2l^{(j)}_{k_1,k_2}{+}2}}{\ell^{(j)}_{k_1,k_2}{+}1}{} _{1}\F_{1}\left(\ell^{(j)}_{k_1,k_2}{+}1,\ell^{(j)}_{k_1,k_2}{+}2,-\beta  u^2\right),
    \end{split}
\end{equation}
from where we get~\eqref{eq:alt.CDF}, in the same manner as~\eqref{eq:CDF} is
obtained from~\eqref{eq:alt.CDF}.
\end{proof}

\begin{remark}
Theorem~\ref{theo:2d.CDF} and Proposition~\ref{theo:altCDF} provide
two alternative versions of the CDF. In general, \eqref{eq:CDF}
involves fewer terms to calculate in the third series, because of 
reductions obtained by analytical integration of
$\Delta_{2\ell_{k_1,k_2}^{(j)}{+}1}^u$. However, as seen, for example, in
Section~\ref{sec:Rice} or in Section~\ref{sec:back}, there are some 
special cases where Proposition~\ref{theo:altCDF} may provide faster
calculations because of the direct calculation of~\eqref{eq:alt.CDF}
through high-precision numerical algorithms in standard software libraries.
\end{remark}

\subsubsection{The Moment Generating Function}
\label{sec:mgfs}
\begin{theorem}\label{theo:mgf}

  Under the framework and definitions of Result~\ref{theo:2d.PDF} and
  Theorem~\ref{theo:2d.CDF}. the MGF $\M_R(t)$, of $R$ exists
  $\forall t\in\R$, and is
  \begin{equation}
    \begin{split}
    \M_R(t) {=}&\alpha\sum_{j=0}^{\infty}\!\epsilon_j\cos 2j
    \delta\!\left(\frac{1}{8}\eta\psi^2 \right)^j\!\!\!\sum_{k_1=0}^{\infty}\sum_{k_2=0}^{\infty}
                 C(k_1,k_2,j;\psi,\eta)       \mI^{(\beta)}_{2\ell_{k_1,k_2}^{(j)}{+}1}(t),
    \end{split}
        \label{eq:mgf}
      \end{equation}
  where, for any odd integer $m$, we define 
  \begin{equation}
    \begin{split}
      \mI_m^{(\beta)}(t){=}& \frac{\Gamma\left(\frac{m{+}1}2\right)}{2\beta^{\frac{m{+}1}{2}}}
                       {}_{1}\F_{1}\left(\frac{m{+}1}2,\frac{1}{2},\frac{t^2}{4\beta}\right){+}t\,\frac{\Gamma\left(\frac{m}2{+}1\right)}{2\beta^{m/2+1}}\,
        {}_{1}\F_{1}\left(\frac{m}{2}{+}1,\frac{3}{2},\frac{t^2}{4\beta}\right),
    \end{split}
    \label{eq:int}
  \end{equation}
  which admits an alternative representation given by
   \begin{equation}
    \begin{split}
      \mI_m^{(\beta)}(t)&{=}
      \frac{\sqrt{\pi}\exp{\left(\frac{t^2}{4\beta}\right)}}{2\beta^{\frac{m{+}1}2}}\left\{
      \Gamma\left(\frac m2{+}1\right) \Lag_{\frac
      m2}^{({-}\frac12)}\Bigg({-}\frac{t^2}{4\beta}\right){+}\frac{t}{2\sqrt{\beta}}\Gamma\left(\frac {m{+}1}2\right) \Lag_{\frac
      {m{-}1}2}^{(1/2)}\left({-}\frac{t^2}{4\beta}\right)\Bigg\}.
    \end{split}
    \label{eq:alt.I}
  \end{equation}
Here, $\Lag_\nu^{(a)}(x)$ is a Laguerre function, as introduced
by \citet{pinney46}, for unrestricted $\nu$. When  $\nu$ is a
nonnegative integer, $\Lag_\nu^{(a)}(
x)$ is more commonly known as an
{\em associated Laguerre polynomial}.   
\end{theorem}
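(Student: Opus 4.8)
The plan is to write $\M_R(t)=\int_0^\infty e^{tr}f_R(r;\bmu,\bSigma)\,dr$ with the density $f_R$ of Result~\ref{theo:2d.PDF} as in \eqref{eq:2d.PDF}, insert the power-series expansion \eqref{eq:prod.Bessel} of $\I_{2j}(-\psi r)\I_j(\eta r^2)$, and reduce the whole computation to the single scalar integral $\mI_m^{(\beta)}(t)=\int_0^\infty x^m e^{tx-\beta x^2}\,dx$ at the odd exponent $m=2\ell_{k_1,k_2}^{(j)}+1$. Once the exchange of $\int_0^\infty$ with $\sum_j\sum_{k_1}\sum_{k_2}$ is justified, \eqref{eq:prod.Bessel} gives $\int_0^\infty\alpha r\,e^{tr-\beta r^2}\I_{2j}(-\psi r)\I_j(\eta r^2)\,dr=\alpha(\frac18\eta\psi^2)^j\sum_{k_1,k_2}C(k_1,k_2,j;\psi,\eta)\,\mI^{(\beta)}_{2\ell_{k_1,k_2}^{(j)}+1}(t)$, and multiplying by $\epsilon_j\cos2j\delta$ and summing over $j$ produces \eqref{eq:mgf}.

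I expect the one genuinely delicate point to be the justification of this exchange, which also yields the finiteness of $\M_R(t)$. The key is a crude pointwise bound on the nonnegative bracket in \eqref{eq:2d.PDF}: since $\I_{2j}(-\psi r)=\I_{2j}(\psi r)\ge0$, $|\cos2j\delta|\le1$, $\I_j(\eta r^2)\le\I_0(\eta r^2)\le e^{\eta r^2}$ (from $\I_j(z)=\pi^{-1}\int_0^\pi e^{z\cos\theta}\cos j\theta\,d\theta$ and $|\cos j\theta|\le1$), and $\sum_{j\ge0}\epsilon_j\I_{2j}(\psi r)\le\sum_{m\ge0}\epsilon_m\I_m(\psi r)=e^{\psi r}$, one gets $0\le f_R(r;\bmu,\bSigma)\le\alpha r\,e^{\psi r-(\beta-\eta)r^2}$. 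Because $(\sigma_1^2+\sigma_2^2)^2-(\sigma_2^2-\sigma_1^2)^2-4\rho^2\sigma_1^2\sigma_2^2=4\sigma_1^2\sigma_2^2(1-\rho^2)>0$, we have $\beta>\eta\ge0$, so $\int_0^\infty e^{tr}\alpha r\,e^{\psi r-(\beta-\eta)r^2}\,dr<\infty$ for every $t\in\R$; this proves $\M_R(t)<\infty$, and since the same majorant dominates the sum of the absolute values of the terms of the triple series, Fubini's theorem legitimizes the term-by-term integration above.

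It then remains to evaluate $\mI^{(\beta)}_m(t)$ for odd $m$. I would expand $e^{tx}=\sum_{n\ge0}t^nx^n/n!$, integrate term by term (legitimate by Tonelli, since $\int_0^\infty e^{|t|x}x^m e^{-\beta x^2}\,dx<\infty$), and use $\int_0^\infty x^a e^{-\beta x^2}\,dx=\frac12\Gamma(\frac{a+1}2)\beta^{-(a+1)/2}$. Separating the sum over $n$ into its even ($n=2s$) and odd ($n=2s+1$) parts and using the duplication-formula identities $(2s)!=4^s(\frac12)_s\,s!$ and $(2s+1)!=4^s(\frac32)_s\,s!$, the two resulting power series in $t^2/(4\beta)$ are exactly ${}_{1}F_{1}(\frac{m+1}2,\frac12,\frac{t^2}{4\beta})$ and ${}_{1}F_{1}(\frac m2+1,\frac32,\frac{t^2}{4\beta})$, which is \eqref{eq:int}.

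For the alternate form \eqref{eq:alt.I} I would apply Kummer's transformation ${}_{1}F_{1}(a,b,z)=e^z\,{}_{1}F_{1}(b-a,b,-z)$ to both confluent hypergeometric factors in \eqref{eq:int}, producing the common prefactor $e^{t^2/(4\beta)}$ and the functions ${}_{1}F_{1}(-\frac m2,\frac12,-\frac{t^2}{4\beta})$ and ${}_{1}F_{1}(-\frac{m-1}2,\frac32,-\frac{t^2}{4\beta})$, and then invoke Pinney's identity $L_\nu^{(a)}(x)=\frac{\Gamma(\nu+a+1)}{\Gamma(\nu+1)\Gamma(a+1)}\,{}_{1}F_{1}(-\nu,a+1,x)$ with $(\nu,a)=(\frac m2,-\frac12)$ for the first term and $(\nu,a)=(\frac{m-1}2,\frac12)$ for the second; the gamma prefactors recombine into those displayed in \eqref{eq:alt.I}. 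Since $\frac{m-1}2$ is a nonnegative integer for odd $m$, the second Laguerre function is in fact an associated Laguerre polynomial, while the first, with half-integer order, is a genuine Laguerre function. Apart from the convergence/Fubini bookkeeping of the second paragraph and keeping the Pochhammer symbols correctly paired with the ${}_{1}F_{1}$ parameters, the calculation is routine.
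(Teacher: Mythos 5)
Your proposal is correct and follows essentially the same route as the paper: dominate the series by $\alpha r\exp\{\psi r-(\beta-\eta)r^2\}$ to get existence and justify term-by-term integration, reduce to $\mI^{(\beta)}_m(t)=\sum_k t^k\Delta^\infty_{m+k}/k!$ split into even and odd parts to obtain \eqref{eq:int}, and pass to \eqref{eq:alt.I} via Kummer's transformation and Pinney's relation. Your observation that $\beta>\eta$ holds \emph{strictly} (since $1-\rho^2>0$) is in fact slightly cleaner than the paper's ``$\beta\geq\eta$,'' which is what the convergence of the final Gaussian-type integral actually requires.
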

\begin{proof} We have
  $\M_R(t){=}\E\{\exp{(tR)}\}{=}\int_{0}^{\infty}\!\!\!\exp{(tr)}f_{R}(r)dr$,
  with $f_R(r)$ as in~\eqref{eq:2d.PDF}. 
Therefore, 
  \begin{equation}\M_R(t){=}\alpha\sum_{j=1}^{\infty}\!\!\epsilon_j\cos 2j
    \delta\!\!\int_0^\infty\!\!\!\!\!\!r\exp{(tr{-}\beta
    r^2)}\{\I_{2j}(\psi r)\I_{j}(\eta r^2)\}dr.
  \label{eq:MGF}
\end{equation}
Existence of a MGF is almost immediate. For we have that $\I_{j}(\kappa){\leq}
\I_{0}(\kappa),\mbox{ } \forall j\geq 1$. Also $\abs{\cos
  2j\delta}<1$, and the integrands are all nonnegative. Hence,
\begin{align*}
  \abs{\M_R(t)} & \leq\alpha\!\!\int_0^\infty\!\!\!\!\!\!r\exp{(tr{-}\beta                  r^2)}\I_{0}(\psi r)\sum_{j=1}^{\infty}\!\!\epsilon_j\I_{j}(\eta r^2)dr.\\
 \end{align*}
 Also 
 $\I_0(\psi r){<}\exp{(\psi r)}$, 
$\sum_{j=1}^{\infty}\epsilon_j\I_{j}(\eta r^2) {=} \exp{(\eta
  r^2)}$, and $\beta {\geq} \eta$ since $\abs{\rho}{\leq} 1.$
Therefore,
\begin{equation}
  \begin{split}
\abs{\M_R(t)} & {\leq}  \int_{0}^{\infty}\!\!\!\!\!\alpha r \exp{(tr{-}\beta
  r^2{+}\psi r{+}\eta r^2)}dr = \alpha
\exp{\left\{\frac{(t{+}\psi)^2}{4(\beta{-}\eta)}\right\}}\!\!\int_{0}^{\infty}\!\!\!\!\!\!r
\exp{[{-}\{r{-}\frac{t{+}\psi}{2\sqrt{\beta{-}\eta}}\}^2]}dr
    {<}\infty
    \end{split}
\end{equation}
for $t\in\R$, since
the integral is proportional to $\E({W\1[W\geq0]})$, with $W{\sim }
\mN(\frac{t+\psi}{2\sqrt{\beta-\eta}}, \frac12)$.

It remains to
show~\eqref{eq:mgf}. 
  From~\eqref{eq:prod.Bessel}, 
\begin{align*}
    \int_{0}^{\infty}\!\!\!\!\!r &\exp{(tr{-}\beta r^2)}\I_{2j}(\psi r)\I_{j}(\eta r^2) dr
{=} \left(\frac{1}{8}\eta\psi^2 \right)^j\sum_{k_1=0}^{\infty}\sum_{k_2=0}^{\infty}
                                                                                                C(k_1,k_2,j;\psi,\eta)\mI_{2\ell_{k_1,k_2}^{(j)}{+}1}(t;\beta)
\end{align*}
where
\begin{equation}
 \mI^{(\beta)}_m(t) {=} \int_{0}^{\infty}\!\!\!r^m\exp{(tr{-}\beta
  r^2)}dr {=}
\sum_{k=0}^{\infty}\frac{t^{k}}{k!}{\Delta}^\infty_{m+k},
\label{eq:I}
\end{equation}
and $m$ is odd. Splitting the series in terms of series of odd and even terms, and
using $\Delta^{\infty}_n{=}\frac{\Gamma\left(\frac{n{+}1}{2}\right)}{2\beta^{\frac{n{+}1}{2}}}$
for even $n$, from \eqref{eq:delta.even}, and 
for odd $n$ from \eqref{eq:delta.odd}, we get
\begin{equation}
\begin{split}
  \mI^{(\beta)}_m(t) &{=}
  \sum_{k=0}^{\infty}\frac{t^{2k}}{(2k)!}\frac{(\frac{m-1}2{+}k)!}{2\beta^{\frac{m+1}2{+}k}}
                         {+}\sqrt{\pi}\sum_{k=0}^{\infty}\!\!\frac{t^{2k{+}1}}{(2k{+}1)!}\frac{(m{+}2k{+}1)!}{(\frac{m{-}1}2{+}k{+}1)!}\frac1{2^{m+2k+2}\beta^{\frac{m}2{+}k{+}1}}\\
  &= \frac{\left(\frac{m{-}1}2\right)!}{2\beta^{\frac{m{+}1}{2}}}
    {}_{1}\F_{1}\left(\frac{m{+}1}2,\frac{1}{2},\frac{t^2}{4\beta}\right)
   {+}t\frac{\sqrt\pi}{(4\beta)^{m/2+1}}\frac{(m+1)!}{\left(\frac{m{+}1}2\right)!}     {}_{1}\F_{1}\left(\frac{m}{2}{+}1,\frac{3}{2},\frac{t^2}{4\beta}\right)\\
  &= \frac{\Gamma\left(\frac{m{+}1}2\right)}{2\beta^{\frac{m{+}1}{2}}}
    {}_{1}\F_{1}\left(\frac{m{+}1}2,\frac{1}{2},\frac{t^2}{4\beta}\right){+}t\,\frac{\Gamma\left(\frac{m}2{+}1\right)}{2\beta^{m/2+1}}     {}_{1}\F_{1}\left(\frac{m}{2}{+}1,\frac{3}{2},\frac{t^2}{4\beta}\right)
    ,
\end{split}
\label{eq:Is}
  \end{equation}
from where~\eqref{eq:mgf} follows.

To obtain the alternative representation in~\eqref{eq:alt.I}, we note
that from (13.1.27) of~\citet{abramowitzandstegun64}, we get the relationships:
\begin{equation}
\begin{split}
  {}_1\F_1\left(\frac {m+1}2,\frac12,\frac{t^2}{4\beta}\right) & =
                                                     \exp{\left(\frac{t^2}{4\beta}\right)}{}_1\F_1\left(-\frac{m}2,\frac12,-\frac{t^2}{4\beta}\right),
 \\
 {}_1\F_1\left(\frac
  m2{+}1,\frac32,\frac{t^2}{4\beta}\right) & =
                                             \exp{\left(\frac{t^2}{4\beta}\right)}{}_1\F_1\left(-\frac{m-1}2,\frac32,-\frac{t^2}{4\beta}\right).\\
  \end{split}
\label{eq:I.1F1a}
\end{equation}
From \S 16.1 of~\citet{whittakerandwatson1915}, and then (2.8) of \citet{pinney46}
\begin{align*}
  _1\F_1(a,b,x) & =  \exp{\left(\frac x2\right)}x^{-\frac b2}\mM_{\frac
                 b2-a,\frac{b-1}2}(x)= \frac{\Gamma(b)\Gamma(1-a)}{\Gamma(b-a)} \Lag_{-a}^{(b-1)}(x),
\end{align*}
where $\mM_{\kappa,\mu}(x)$ is one of the two Whittaker functions~\citep{whittaker1903}. Therefore,
\begin{equation}
  \begin{split}
  {}_1\F_1\left(-\frac{m}2,\frac12,-\frac{t^2}{4\beta}\right)  & =
  \frac{\Gamma\left(\frac12\right)\Gamma\left(\frac
      m2{+}1\right)}{\Gamma\left(\frac{m+1}2\right)}\Lag_{\frac{m}2}^{(-1/2)}\left(-\frac{t^2}{4\beta}\right),\\
  {}_1\F_1\left(-\frac{m{-}1}2,\frac32,-\frac{t^2}{4\beta}\right) & =
  \frac{\Gamma\left(\frac32\right)\Gamma\left(\frac{m+1}2\right)}{\Gamma\left(\frac
                                                                 m2+1\right)}\Lag_{\frac{m{-}1}2}^{(1/2)}\left(-\frac{t^2}{4\beta}\right). \\
    \end{split}
  \label{eq:I.1F1b}
\end{equation}
Combining~\eqref{eq:I.1F1a} and \eqref{eq:I.1F1b} and inserting into
\eqref{eq:Is} yields \eqref{eq:alt.I}.
\end{proof}

\begin{remark}
  We make two comments on our results.
  \begin{enumerate}
  \item Our Laguerre functions are either associated Laguerre
    polynomials of nonnegative integer order (with $\alpha{=}\frac12$),
    or Laguerre functions     of half-integer order (with
    $\alpha{=-}\frac12$), for which analytical expressions can aid
    computations. 
  \item 
    Appendix~\ref{sec:alterMGF} derives an alternative form of the MGF of
the second order GBD. However, while that form does not require the
evaluation of first order confluent hypergeometric functions, it requires the
calculation of more terms, and so we consider Theorem~\ref{theo:mgf} to be the
preferred approach, with the Laguerre functions or
$_1\F_1(\cdot,\cdot,\cdot)$ evaluated in high precision using
standard software libraries.  
\end{enumerate}
\end{remark}

\subsubsection{Moments}
\label{sec:moments}
Even-ordered raw moments, especially of low order, can be easily obtained from the definition of $R$, but a formula for the general $s$th moment is found as a corollary to Theorem~\ref{theo:mgf} as follows:
\begin{corollary}
  Under the framework and definitions of Result~\ref{theo:2d.PDF} and
  Theorems~\ref{theo:2d.CDF} and \ref{theo:mgf}, the $s$th raw moment
  of $R$ is $\upmu_s {=} \E(R^s)$, and given by the formula
  \begin{equation}
	    \upmu_s{=}\frac{\alpha}{2\beta^{\frac s2+1}}\sum_{j=0}^{\infty}\epsilon_j\cos 2j\delta\left(\frac{\psi^2}{4\beta}\right)^{j}   \sum_{k{=}0}^{\infty}
    \left(\frac\eta{2\beta}\right)^{2k+j}
    \frac{\Gamma\left(\frac{s}{2}{+}2k{+}2j{+}1\right)}{k!(k+j)!} 
											{    {{}_{1}\F_{1}\left(\frac{s}{2}{{+}}2k{+}2j{{+}}1,2j{+}1,\frac{\psi^2}{4\beta}\right).}}
      \label{eq:mus}
  \end{equation}
  \label{theo:mus}
\end{corollary}
\begin{proof}
We obtain the $s$th raw moment from the MGF from $\upmu_s {=} \frac
{d^s}{dt^s} \M_R(t)\Big|_{t=0}.$ The only part of $\M_R(t)$ involving
$t$ in~\eqref{eq:mgf} is
$\mI^{(\beta)}_{2\ell_{k_1,k_2}^{(j)}{+}1}(t)$.
Let $\lceil w \rceil$ be the smallest integer not below $w$. 
From the second equality in \eqref{eq:Is}, 
\begin{equation}
  \begin{split}
\frac
{d^s}{dt^s} \mI^{(\beta)}_{m}\!(t){ =}&\!\!\!\!
\sum_{k=\lceil\frac s2\rceil}^{\infty}\frac{t^{2k{-}s}}{(2k{-}s)!}\frac{(\frac{m-1}2{+}k)!}{2\beta^{\frac{m+1}2{+}k}}
   {+}\!\!\!\!\!\!\sum_{k=\lceil\frac{s-1}2\rceil}^{\infty}\!\!\frac{t^{2k{+}1{-}s}}{(2k{+}1{-}s)!}\frac{(m{+}2k{+}1)!}{(\frac{m{-}1}2{+}k{+}1)!}\frac{\beta^{-(\frac{m}2{+}k+1)}}{2^{m+2k+1}}.
    \label{eq:Ims}
  \end{split}
\end{equation}
Evaluated at $t{=}0$, the first term in the first series in
\eqref{eq:Ims} is the only one that survives for even $s$, and yields
$(\frac{m{+}s{-}1}2)!/(2\beta^\frac{m+s+1}2)$, while for odd
$s$, only the first term in the second series makes it and is
$\sqrt{\pi}{(m+s)!}/\{(\frac{m{+}s}2)!\beta^{(\frac{m{+}s{+}1}2)}2^{(m{+}s{+}1)}\}.$
In both cases, 
the surviving term can be re-expressed
as $\frac{\Gamma(\frac{m{+}s{+}1}2)}{2\beta^{\frac{m{+}s{+}1}2}}$, and
so, since $m = 2\ell_{k_1,k_2}^{(j)}{+}1$, we get 
\begin{equation*}
	\begin{split}
  \upmu_{s}{=}&\frac{\alpha}{2\beta}\sum_{j=0}^{\infty}\epsilon_j\cos 2j
              \delta\left(\frac{\eta\psi^{2}}{8}\right)^{j}\sum_{k_1{=}0}^{\infty}\sum_{k_2=0}^{\infty}C(k_1,k_2,j;\psi,\eta)
              {\frac{\Gamma\left(\frac{s}{2}{+}\ell_{k_1,k_2}^{(j)}{+}1\right)}{\beta^{\frac{s}{2}{+}\ell_{k_1,k_2}^{(j)}}}}\\
  {=}&
  \frac{\alpha}{2\beta^{\frac s2+1}}\sum_{j=0}^{\infty}\epsilon_j\cos 2j
\delta\left(\frac\eta{2\beta}\frac{\psi^{2}}{4\beta}\right)^{j}\!\!\sum_{k_1{=}0}^{\infty}\sum_{k_2=0}^{\infty}\left(\frac{\psi^2}{4\beta}\right)^{k_1}\!\!\!\left(\frac\eta{2\beta}\right)^{2k_2}\!\!\!
\frac{\Gamma\left(\frac{s}{2}{+}k_1{+}2k_2{+}2j{+}1\right)}{k_1!(k_1+2j)!k_2!(k_2+j)!} \\
  {=}&\frac{\alpha}{2\beta^{\frac s2+1}}\sum_{j=0}^{\infty}\epsilon_j\cos 2j\delta\left(\frac\eta{2\beta}\frac{\psi^2}{4\beta}\right)^{j}   \sum_{k{=}0}^{\infty}
    \left(\frac\eta{2\beta}\right)^{2k}
  \frac{\Gamma\left(\frac{s}{2}{+}2k{+}2j{+}1\right)}{k!(k+j)!} {  {{}_{1}\F_{1}\left(\frac{s}{2}{{+}}2k{+}2j{{+}}1,2j{+}1,\frac{\psi^2}{4\beta}\right)},}\\
\end{split}
      \end{equation*}
and the result follows. 
\end{proof}

\begin{remark} 
	\label{remark.mus}
	Closed-form expressions are possible for the moments of even order: we write down the first two even-ordered moments that are useful in the context of MOM estimation of parameters. 
\ben
\item The second order raw moment is obtained by noting that  the second raw moment of any $\mN(\mu,\sigma^2)$ random variable is $\mu^2{+}\sigma^2,$ and so
	\begin{equation}
		\upmu_2 {=} \E(X_1^2 {+} X_2^2) {=} \mu_1^2{+}\mu_2^2 {+} \sigma_1^2 {+} \sigma_2^2. 
		\label{eq:simple.mu2}
	\end{equation}
\item The fourth order raw moment of $R$ is given by
	\begin{equation}
		\begin{split}
			\upmu_4 
			{=} \mu_1^4{+}\mu_2^4 {+} 6(\sigma_1^2\mu_1^2{+}\sigma_2^2\mu_2^2){+}3(\sigma_1^4{+}\sigma_2^4) {{+}2\left\{\mu_1^2\mu_2^2{+}(1{+}2\rho^2)\sigma^2_1\sigma_2^2 {+} \mu_1^2\sigma_2^2 {+} \mu_2^2\sigma_1^2  {+} 4\rho\mu_1\mu_2\sigma_1\sigma_2\right\}}.
	\end{split}
		\label{eq:simple.mu4}
	\end{equation}
	To see this, note that $\upmu_4 {=} \E\left\{(X_1^2{+}X_2^2)^2\right\} = \E(X_1^4) {+} \E(X_2^4) {+} 2\E(X_1^2X_2^2)$. Also, the third and fourth raw moments of any $\mN(\mu,\sigma^2)$ random variable $X$ is given by $\E(X^3) {=} \mu^3{+}3\mu\sigma^2$ and $\E(X^4){=} \mu^4{+}6\sigma^2\mu^2+3\sigma^4$. Further, for any bivariate Gaussian random vector $\bX{\sim}\mN_2(\bmu,\bSigma)$ with $\bmu$ and $\bSigma$ as in the statement of Result~\ref{theo:2d.PDF}, the conditional distribution of $X_1$ given $X_2$ is normal with conditional mean $\E(X_1{\mid}X_2){=}\mu_1 {+} \rho\frac{\sigma_1}{\sigma_2}(X_2{-}\mu_2)$ and conditional variance $\sigma_1^2(1 - \rho^2)$. Therefore, 
	\begin{equation*}
		\begin{split}
			\E(X_1^2X_2^2) {=} \E_{X_2}\left[X_2^2\left\{\E(X_1^2{\mid}X_2)\right\}\right]
				       {=}\E_{X_2}\left[X_2^2\left\{\mu_1^2 {+} \rho^2\frac{\sigma_1^2}{\sigma_2^2}(X_2{-}\mu_2)^2 {+}2\rho\frac{\sigma_1}{\sigma_2}\mu_1(X_2-\mu_2)  {+} \sigma_1^2(1{-}\rho^2) \right\}\right]
		\end{split}
		\end{equation*}
	\een
\end{remark}

\subsubsection{Identifiability of parameters}
\label{sec:identifiability}
The second order GBD  ostensibly has five parameters, but lacks identifiability in all of them because 
 $R=\bX^\top\bX{\equiv} \sqrt{\bY^\top\bY}$ for $\bY{=}\bGamma\bX$ where $\bGamma$ is any orthogonal matrix. Consequently, with different choices of $\bGamma$, the second order GBD  can be reparametrized in many ways. For example, a second order GBD random variable with parameters $(\mu_1,\mu_2,\sigma_1,\sigma_2,\rho)$ is functionally and distributionally equivalent to a second order GBD random variable with parameters 
$(\mu^\bullet_1,\mu^\bullet_2,\sigma^\bullet,\sigma^\bullet,\rho^\bullet)$ satisfying
\begin{equation*}
	\begin{split}
	\mu_1^\bullet & = \frac{\mu_1	\sqrt{\sqrt{(\sigma_1^2-\sigma_2^2)^2 +\rho^2\sigma_1^2\sigma_2^2} + \rho\sigma_1\sigma_2}-			\mu_2
	\sqrt{\sqrt{(\sigma_1^2-\sigma_2^2)^2 +\rho^2\sigma_1^2\sigma_2^2} - \rho\sigma_1\sigma_2}}{\sqrt2\sqrt[4]{(\sigma_1^2-\sigma_2^2)^2 +\rho^2\sigma_1^2\sigma_2^2}},\\
	\mu_2^\bullet & = \frac{\mu_1	\sqrt{\sqrt{(\sigma_1^2-\sigma_2^2)^2 +\rho^2\sigma_1^2\sigma_2^2} - \rho\sigma_1\sigma_2}+			\mu_2	\sqrt{\sqrt{(\sigma_1^2-\sigma_2^2)^2 +\rho^2\sigma_1^2\sigma_2^2} + \rho\sigma_1\sigma_2}}{\sqrt2\sqrt[4]{(\sigma_1^2-\sigma_2^2)^2 +\rho^2\sigma_1^2\sigma_2^2}},\\
	\sigma^\bullet & = \frac12\left\{ \sigma_1^2+\sigma_2^2 - \frac{(\sigma_1^2-\sigma_2^2)\rho\sigma_1\sigma_2}{\sqrt{(\sigma_1^2-\sigma_2^2)^2 + \rho^2\sigma_1^2\sigma_2^2}} \right\}, \qquad\mbox{and}\\
		\rho^\bullet &= \frac{(\sigma_1^2-\sigma_2^2)^2 + 2\rho^2\sigma_1^2\sigma_2^2}{(\sigma_1^2+\sigma_2^2)\sqrt{(\sigma_1^2+\sigma_2^2)^2 +\rho^2\sigma_1^2\sigma_2^2} - \rho\sigma_1\sigma_2}.
	\end{split}
\end{equation*}
That is, the underlying complex Gaussian variable has identical variances in both the real and imaginary components. We call this model the Identical Quadrature Components (IQC) model and discuss it further in Section~\ref{sec:IQC}. 

Another equivalent model is the (second order) Beckmann distribution, or the second order GBD with $\rho=0$, with the underlying Gaussian variances given by the eigenvalues of $\bSigma$ and means given by a projection of the original means $(\mu_1,\mu_2)$ and the projection matrix specified by the normalized eigenvectors of $\bSigma$. Specifically, writing $\bSigma\equiv\bGamma\bLambda\bGamma^\top$ in the GBD in terms of its spectral decomposition, we have $\bLambda$ as the diagonal matrix of the eigenvalues of $\bSigma$
\begin{equation}
\frac{\sigma_1^2+\sigma_2^2}2\pm\frac12\sqrt{(\sigma_1^2-\sigma_2^2)^2+4\rho^2\sigma_1^2\sigma_2^2}
\label{eq:eigenvalues.Beckmann}
\end{equation}
and $\bGamma$ as the orthogonal matrix of the corresponding normalized eigenvectors  that are proportional to 
\begin{equation}
\left(\frac{\sigma_1^2-\sigma_2^2}2 \pm \frac12\sqrt{(\sigma_1^2-\sigma_2^2)^2+4\rho^2\sigma_1^2\sigma_2^2}), \rho\sigma_1\sigma_2\right)^\top,
\label{eq:eigenvectors.Beckmann}
\end{equation}
with normalizing constant  
$\frac{(\sigma_1^2-\sigma_2^2)^2}2+2\rho^2\sigma_1^2\sigma_2^2\pm\frac{(\sigma_1^2-\sigma_2^2)}2\sqrt{(\sigma_1^2-\sigma_2^2)^2+4\rho^2\sigma_1^2\sigma_2^2}$. Then the second order GBD model is not indentifiable from the Beckmann distribution with parameters $\bGamma\bmu$ and the normal variance parameters $\frac{\sigma_1^2+\sigma_2^2}2\pm\frac12\sqrt{(\sigma_1^2-\sigma_2^2)^2+4\rho^2\sigma_1^2\sigma_2^2}$. The discussion here also points to the fact that the IQC distribution can be reparametrized in terms of the Beckmann distribution, and the converse also holds.

\subsubsection{Limiting distributions}
We state and prove the following 
\begin{theorem}
	As $\frac{\psi^2}{\beta}\rightarrow\infty$, the density of $R$ is approximately $\mN(\zeta,\tau^2)$ with parameters 
	\begin{equation}
		\zeta = \sqrt[4]{(\mu_1^2+\mu_2^2)^2 + 2(\mu_1^2\sigma_2^2 + \mu_2^2\sigma_1^2 - 2\rho\mu_1\mu_2\sigma_1\sigma_2) + 2\sigma_1^2\sigma_2^2(1-\rho^2)},
		\label{eq:Gauss.mu}
	\end{equation}
	and 
	\begin{equation}
	\tau =\sqrt{\mu_1^2 + \mu_2^2 + \sigma_1^2 + \sigma_2^2 - \zeta^2}.
	\label{eq:Gauss.tau}
\end{equation}
	\label{conjecture}
\end{theorem}
\begin{proof}
	We first show Gaussianity of the second order GBD as $\frac{\psi^2}\beta\rightarrow\infty$. We show that in that case, the $s$th cumulant $\kappa_s\rightarrow0$ for $s\geq3$. To do so, we use  (13.1.4) of \citep{abramowitzandstegun64} which states that $\Gamma(a){}_1\F_1(a,b,z)=\Gamma(b)\exp{(z)}z^{a-b}\{1+\mO(\abs{z}^{-1})\}$. Then, from \eqref{eq:mus},
		  \begin{equation}
    	    \upmu_s{\rightarrow}\left(\frac{\psi^2}{4\beta}\right)^s\frac{\alpha}{2\beta}\sum_{j=0}^{\infty} \sum_{k{=}0}^{\infty}
 \epsilon_j\cos 2j\delta\left(\frac{\psi^2}{4\beta}\frac\eta{2\beta}\right)^{2k+j}
    \frac{\Gamma\left(2j{+}1\right)}{k!(k+j)!} \equiv \left(\frac{\psi^2}{4\beta}\right)^s,
      \label{eq:mus.asymp}
  \end{equation}
  for all raw moments,  with the last reduction in \eqref{eq:mus.asymp} because $\upmu_0\equiv1$, always. 
For $s>1$, the $s$th cumulant in terms of the raw moments is
\begin{equation}
\kappa_{s}=\sum_{k=1}^{s}({-}1)^{k{-}1}(k{-}1)!\B_{s,k}(\upmu_1,\upmu _{2},\ldots ,\upmu _{{s-k+1}}),
\label{eq:kr}
\end{equation}
where $\B_{s,k}(\cdot)$ are the incomplete or partial exponential Bell polynomials\citep{bell34} given by 
\begin{equation}\B_{s,k}(x_{1},x_{2},\dots ,x_{{s-k+1}})=
s!\sum \prod _{i=1}^{{s-k+1}}{\frac {x_{i}^{j_{i}}}{(i!)^{j_{i}}j_{i}!}},
\label{eq:bell}
\end{equation}
where the sum is taken over all sequences $j_1, j_2, \ldots, j_{{s-k+1}}$ of non-negative integers satisfying ${ j_{1}{+}j_{2}{+}\cdots{ +}j_{{s-k+1}}=k,}$ and ${\sum_{m=1}^{{s-k+1}}mj_{m}=s.}$ 
  Incorporating \eqref{eq:mus.asymp} into \eqref{eq:kr} for $s{\geq}3$, and using the Bell polynomial specification of \eqref{eq:bell} yields
\begin{equation}
	\begin{split}
    \kappa_s \rightarrow  \sum_{k=1}^{s}({-}1)^{k{-}1}(k{-}1)!\B_{s,k}\left(\frac{\psi^2}{4\beta},\frac{\psi^4}{4\beta^2},\ldots ,\frac{\psi^{2{s-k+1}}}{4\beta^{s-k+1}} \right) & = \left(\frac{\psi^2}{4\beta}\right)^s 
\sum_{k=1}^{s}({-}1)^{k{-}1}(k{-}1)!\
				     s!\sum \prod _{i=1}^{{s-k+1}}{\frac1{(i!)^{j_{i}}j_{i}!}}\\
				     & =\left(\frac{\psi^2}{4\beta}\right)^s \sum_{k=1}^{s}({-}1)^{k{-}1}(k{-}1)!\B_{s,k}\left(1,1,\ldots,1 \right) \\
	\end{split}
	\label{eq:kr.asymp}
\end{equation}
with the inner sum in the penultimate expression as in \eqref{eq:bell}. Further, from Lemma~\ref{lemma:bellsum}, $\sum_{k=1}^{s}({-}1)^{k{-}1}(k{-}1)!\B_{s,k}\left(1,1,\ldots,1 \right) = 0$
 and so $\kappa_s\rightarrow0$ as $\frac{\psi^2}{\beta}\rightarrow\infty$ for $s\geq3$. From the discussion on Page 49 of \citet{johnsonandkotz70} that follows (14) there, the normal distribution is characterized by zero cumulants of order higher than three, and so the limiting distribution of the second order GBD is indeed normal.

It remains to calculate the parameters of this limiting Gaussian distribution. 
The above arguments also show that $\kappa_2\rightarrow0$ and so the approximation is not precise enough to yield a non-zero asymptotic variance for our limiting distribution. 
At the same time, \eqref{eq:mus}, for odd ordered moments where closed-form expressions are not possible, can suffer from numerical stability, in almost precisely the cases favoring the conditions for the limiting distribution. So, we adopt an indirect approach to calculate the parameters of the limiting Gaussian distribution.
From Remark~\ref{remark.mus}, even-ordered moments of the second order GBD are exactly expressed in closed form. 
Since these are the exact second and fourth ordered moments, we can use them to obtain the parameters of the limiting Gaussian distribution. Proposition~\ref{prop:gaussian.parameters}
in Appendix~\ref{sec:normal} derives the mean and variance of a Gaussian random variable given its second and fourth raw moments. 
The parameters for our limiting Gaussian distribution are then immediate upon inserting \eqref{eq:simple.mu2} and \eqref{eq:simple.mu4} for $\upmu_2$ and $\upmu_4$ in Proposition~\ref{prop:gaussian.parameters} and further simplification.
\end{proof}

Another asymptotic distributional result is obtained as $\bSigma$ tends towards degeneracy. Specifically, we have
\begin{theorem}
	\label{theo:rho.infty}
	As $\rho\rightarrow\pm1$, the limiting density of $R$ is
	\begin{equation}
	\label{eq:rho.infty}
	f_R(r;\lambda,\varsigma,\varepsilon) {=}\sqrt{\frac 2\pi}\frac r\varsigma\left(\frac{r^2{-}\varepsilon}{\varsigma}\right)^{-\frac12}\exp{\left(-\frac {r^2{-}\varepsilon}{2\varsigma}-\frac\lambda2\right)}\cosh{\left(\sqrt{\frac{\lambda (r^2{-}\varepsilon)}{\varsigma}}\right)}\1[r>\sqrt\varepsilon],
	\end{equation}
	where $\varsigma={\sigma_1^2{{+}}\sigma_2^2}$, $	\lambda=\varsigma^{-2}\{\mu_1^2\sigma_1^2{+}\mu_2^2\sigma_2^2{+} 2\mu_1\mu_2\sigma_1\sigma_2\mbox{sign}(\rho)\}$, and $\varepsilon {=} \varsigma^{-1}\{\mu_2^2\sigma_1^2{+}\mu_1^2\sigma_2^2\allowbreak{-}2\mu_1\mu_2\sigma_1\sigma_2\sign(\rho)\}$.
\end{theorem}
\begin{proof}
	From Section~\ref{sec:identifiability}, the value of $R$ and hence its distribution is invariant under orthogonal transformation of the generating bivariate normal random vector $\bX$. Now, for $\bX\sim\mN_2(\bmu,\bSigma)$ with $\bSigma$ as in Result~\ref{theo:2d.PDF}, as $\rho\rightarrow\pm1$, the larger eigenvalue of $\bSigma$ tends to $\varsigma{=}\sigma_1^2{{+}}\sigma_2^2$, while the smaller one tends to 0. The corresponding limiting (orthonormal) eigenvectors are $\varsigma^{-\frac12}\{\sigma_1,\sigma_2\mbox{sign}(\rho)\}^\top$ and $\varsigma^{-\frac12}\{-\sigma_2,\sigma_1\mbox{sign}(\rho)\}^\top$. Consequently, as $\rho\rightarrow\pm1$, the limiting distribution of $R$ matches the distribution of $\sqrt{\varsigma Y_1^2{+}Y_2^2}$ where $
	Y_1{\sim}\mN(\varsigma^{-1}\{\mu_1\sigma_1{+}\mu_2\sigma_2\mbox{sign}(\rho)\}, 1)$, and is independent of $Y_2$ that has point mass at $c {=} \varsigma^{-\frac12}\{\mu_2\sigma_1\mbox{sign}(\rho){-}\mu_1\sigma_2\}.$
Therefore the limiting distribution of $R$ as $\rho\rightarrow\pm1$ is the distribution of the random variable $U=\sqrt{\varsigma W{+}\varepsilon}$ where $U$ is a non-central $\chi^2_{1,\lambda}$ random variable with non-centrality parameter $\lambda$, and $\varepsilon{=}c^2.$ 
The result follows from Theorem~\ref{theo:pdf.ncpchi} in Appendix~\ref{sec:chi} for $k{=}1$ and $\varepsilon,\varsigma,\lambda$ as defined above, and from (10.2.4) of \citet{abramowitzandstegun64} that alternatively represents $\I_{-\frac12}(x) = \frac{\cosh{(x)}}{\sqrt{\frac{x\pi}2}}$.
\end{proof}
The CDF for $R$ is $F_R(r){=}1-\Q_{\frac12}(\sqrt\lambda,\sqrt{(r^2{-}\varepsilon)/\varsigma}),$ folllowing \eqref{eq:cdf.ncpchi} in Appendix~\ref{sec:chi}. Similarly, the moments of $R$ as $\rho\rightarrow\pm1$ can be obtained from Corollary~\ref{cor:moments.cad} there by setting $k{=1}$ and with  $\varepsilon,\varsigma,\lambda$ as in Theorem~\ref{theo:rho.infty}.

\subsubsection{Numerical aspects}
\label{sec:mus.asymptotics}
Computing the density \eqref{eq:2d.PDF}, the CDF \eqref{eq:CDF} or the moments \eqref{eq:mus} is problematic when $\frac{\psi^2}{4\beta}$ or $\abs{\rho}$ is large because of instability in the special functions in these expressions, so we appeal to Theorem~\ref{conjecture} or Theorem~\ref{theo:rho.infty}, as appropriate, in such situations. We study computational asymptotics in Section~\ref{sec:IQC}, and use the asymptotic PDF, CDF and moments when the formulae derived in this section show instability. The exact scenarios with such instability are investigated via simulation in Section~\ref{sec:assessments}.

\section{Application to Special Distributions}
\label{sec:special}
We now illustrate our derivations of Section~\ref{sec:main} on some
envelope distributions that are special cases of the generalized
Beckmann distribution. In some cases, the properties are already known and
our derivations provide the same or alternative characterization,
while in others, our derivations provide additional insights into these  distributions.
\subsection{The Rayleigh distribution}
\label{sec:Rayleigh}
In this case,  $\mu_1{=}\mu_2{=}0,$ $\rho{=}0$ and
$\sigma_1{=}\sigma_2{=}\sigma$. Then, $\tilde{\phi}
{=} 0,$ $\phi {=} \pi/2$, $\alpha{=}1/\sigma^{2}$,
$\beta{=}1/(2\sigma^2),$ $\eta{=}\psi{=}0$, $\I_0(0){=}1$ and $\I_s(0){=}0\,\forall\,
s{\geq}1.$ From Result~\ref{theo:2d.PDF}, 
$f_R(r;\mu,\sigma)=\frac{r}{\sigma^2}\exp{\left(-\frac{r^2}{2\sigma^2}\right)}\1{(r{>}0)}$, which is
the known directly calculated PDF of the Rayleigh distribution. 
Further,
$C(k_1,k_2,j;0,0) {=} 0$, unless $k_1{=}k_2{=}j{=}0$, in which case,
$C(0,0,0;0,0){=}1.$ Then,
$F_R(u;\bmu,\allowbreak\bSigma){=}1-\exp{\{-u^2/(2\sigma^2)\}}$, for $u{\geq} 0$,
which we know is the directly calculated CDF of the Rayleigh distribution with scale
parameter $\sigma$. The only non-zero terms in the MGF
of~\eqref{eq:mgf} are when 
$k_1{=}k_2{=}j{=}0.$ Then, from~\eqref{eq:mus},  $\mI_1^{(\beta)}
(t)={_1\F_1}(1,\frac12,\frac{\sigma^2t^2}2) + \sigma
t\sqrt\frac{\pi}2 {}_1\F_1(\frac32,\frac32,\frac{\sigma^2t^2}2)$, and
$\M_R(t) {=}  \alpha \mI_1^{(\beta)}(t)$. From 
Corollary~\ref{lemma:1F1} in Appendix~\ref{sec:proof.lemma1F1}, we have that  
${{}_1\F_1(1,\frac12,\frac{\sigma^2t^2}2){=} 1 {+}\sigma^2t^2
{}_1\F_1(1,\frac32,\frac{\sigma^2t^2}2),}$ and
 from (13.1.27) and (13.6.19)
of~\citet{abramowitzandstegun64},
${_1\F_1(1,\frac32,\frac{\sigma^2t^2}2) {=}
\exp{\left(\frac{\sigma^2t^2}2\right)}
{}_1\F_1(\frac12,\frac32,-\frac{\sigma^2t^2}2)}$ and   
${{}_1\F_1(\frac12,\frac32,-\frac{\sigma^2t^2}2) {=}
\sqrt{\frac\pi2}\frac1{\sigma t} \mathrm{erf}\left(\frac{\sigma t}{\sqrt{2}}\right)},$ while from (13.6.12) of~\citet{abramowitzandstegun64}, ${{}_1\F_1(\frac32,\frac32,\frac{\sigma^2t^2}2) =
\exp{\left(\frac{\sigma^2t^2}2\right)}}$. Combining, 
$$\M_R(t){=} 1+\sigma
t\sqrt{\frac{\pi}{2}}\exp{\left(\frac{\sigma^2t^2}{2}\right)}\left\{\mathrm{erf}\left(\frac{\sigma
      t}{\sqrt{2}}\right)+1\right\},$$ which is the known directly
calculated MGF of the Rayleigh distribution. Concluding, the $s$th
Rayleigh  raw moment is $\upmu_s= (\alpha/2\beta) C(0,0,0,0,0)
\Gamma(s/2+1)/\beta^{\frac s2} = 2^{s/2}\sigma^s\Gamma(s/2+1),$  since
$_1\F_1(a,b,0)=1$, which matches results obtained independently of our
derivations from the formulae in Section~\ref{sec:main}.


\subsection{The Rice distribution}
\label{sec:Rice}
In this case, $\rho{=}0,$ $\sigma_1{=}\sigma_2{=}\sigma$, and
$\mu_1=\nu\cos\xi$, $\mu_2=\nu\sin\xi$ in polar form. Then
$\alpha=\frac{1}{\sigma^2}\exp{\left(-\frac{\nu^2}{2\sigma^2}\right)},$
$\beta=\frac{1}{2\sigma^2}\,\,,
\psi={\nu}/{\sigma^2}$, $\eta{=}0$ (and so the contribution of $\cos 2j\delta$ in any of the quantities is immaterial unless $j=0$). So
$$f\!_R(r;\bmu, \bSigma) {=}\frac r{\sigma^2}\exp{\left(\!-\frac{\nu^2{+}r^2}{2\sigma^2}\!\right)}\I_{0}\left(\frac{\nu  r}{\sigma^2} \right)\1[r{>}0],$$
the known directly calculated Rice($\sigma,\nu$) PDF. Also, as in Section~\ref{sec:Rayleigh}, 
$C(k_1,k_2,j;\psi,\eta)$ makes 
a positive contribution to the series in~\eqref{eq:CDF},~\eqref{eq:MGF}
or~\eqref{eq:mus} only  when $k_2{=}j{=}0$. Now, $C(k_1,0,0;\psi,0) {=}
\frac{\psi^{2k_1}}{4^{k_1}(k_1!)^2}$, so that $T_1(k_1,0,0;\psi,0) =
\frac{\psi^{2k_1}}{4^{k_1}k_1!\beta^{k_1}}$ and
  $T_2(u,k_1,0,0,k;\psi,0) =
  \frac{\psi^{2k_1}}{4^{k_1}k_1!(k_1-k+1)!\beta^{k-1}}
  u^{2(k_1+1-k)}.$ 
  From \eqref{eq:CDF},
  \begin{equation}
    \begin{split}
  F_R(u;\sigma,\nu) &{=} \exp{\left(\!-\frac{\nu^2}{2\sigma^2}\!\right)}
  \sum_{k_1=0}^\infty\frac{\psi^{2k_1}}{4^{k_1}k_1!\beta^{k_1}} {-}
  \exp{\left(-\frac{u^2}{2\sigma^2}\right)}\sum_{k_1=0}^\infty\sum_{k=1}^{k_1+1} \frac{\psi^{2k_1}u^{2(k_1{+}1{-}k)}}{4^{k_1}k_1!(k_1{-}k{+}1)!\beta^{k-1}}
      .
      \end{split}
  \label{eq:Rice.CDF1}
\end{equation}
Also, from the alternative representation of the CDF
in~\eqref{eq:alt.CDF}, we get
\begin{equation}
\begin{split}
  F_R(u;\sigma,\nu) 
                    {=}
                    \frac{\alpha}{2\beta}
                    \sum_{k{=}0}^{\infty}\!\!\frac{C(k,0,0,\psi,0)}{\beta^{k}}{\gamma}(k{+}1,\beta{u^2})
                                       {=}\exp{\left(\!{-}\frac{\nu^2}{2\sigma^2}\!\right)}
                         \sum_{k{=}0}^{\infty}\frac{1}{k!} \frac{{\gamma}(k+1,\frac{u^2}{2\sigma^2})}{\Gamma(k+1)}\left(\frac{\nu^{2k}}{2^k\sigma^{2k}}\right){=}1{-}\Q_1\left(\frac\nu\sigma,\frac{u}{\sigma}\right),
\end{split}
\label{eq:Rice.CDF2}
                         \end{equation}
where $\Q_1(\cdot,\cdot)$ is the generalized Marcum $\Q$-function of the
first order~\citep{marcum60}, and the reduction to it follows from
(2.12) of~\citet{andrasetal11}. The CDF~\eqref{eq:Rice.CDF2} is
preferred over the one in~\eqref{eq:Rice.CDF1} because high-precision
algorithms for $\Q_1(\cdot,\cdot)$ exist in many standard software.

For the MGF, we have from
  \eqref{eq:mgf}, for all $t{\in}\R$,
  \begin{align*}
    \M_R(t)= \exp{\left(-\frac{\nu^2}{2\sigma^2}\right)} \Bigg\{
    \sum_{k=0}^{\infty} \frac{(\frac{\nu}{\sigma})^{2k}}{2^kk!}{}_{1}\F_{1}\!\left(k{+}1,\frac{1}{2},\frac{\sigma^2t^2}{2}\right){ +
  }  \sqrt{\frac{\pi}{2}}\frac{\sigma
                                                                                                                                     t}2\sum_{k=0}^{\infty}\frac{(\frac{\nu}{\sigma})^{2k}}{8^k(k!)^2\,}\frac{(2k{+}2)!}{(k{+}1)!}{{}_{1}\F_{1}\left(\frac{2k{+}3}{2},\frac{3}{2},\frac{\sigma^2t^2}{2}\right)\Bigg\}},
\end{align*}
and, from \eqref{eq:mus},
 \begin{align*}
\upmu_s & 
=
\exp{\left(-\frac{\nu^2}{2\sigma^2}\right)}\sum_{k_1=0}^{\infty}\frac{\nu^{2k_1}}{4^{k_1}\sigma^{4k_1}{k_1!^2\,}}\frac
{\Gamma(k_1+1+\frac s2)}{\beta^{\frac s2{+}k_1}}=\beta^{{-}\frac s2}
\exp{\left(-\frac{\nu^2}{2\sigma^2}\right)}\Gamma\left(\frac
  s2{+}1\right) {}_1\F_1\left(\frac
                                                                                                                                                                        s2{+}1,1,\frac{\nu^2}{2\sigma^2}\right).
 \end{align*}

From (13.1.27) and (13.6.9) of~\citet{abramowitzandstegun64},
\begin{align*}
\exp{({-}\frac{\nu^2}{2\sigma^2})}{}_{1}\F_{1}\left(\frac{s}{2}{+}1,1,\frac{\nu^2}{2\sigma^2}\right)&{=} {}_{1}\F_{1}\left(-\frac{s}{2},1,{-}\frac{\nu^2}{2\sigma^2}\right){=}\Lag_{\frac s2}\left({-}\frac{\nu^2}{2\sigma^2}\right),
  \end{align*}
where $\Lag_q(\cdot)$ is the $q$th (simple) Laguerre polynomial. Therefore,
\begin{equation}
  \upmu_s= 2^{\frac{s}{2}}\sigma^{s}\Gamma\left(\frac{s}2+1\right)\Lag_{\frac s2}\left(\!{-}\frac{\nu^2}{2\sigma^2}\right).
  \label{eq:Rice.mus}
\end{equation}

\subsubsection{Rice distribution asymptotics}
\label{sec:rice.asymptotics}
The Rice distribution is stated to be normally distributed as $\sigma\rightarrow0$, but we are unaware of a formal proof in support of this statement, with many proofs only proving that the right tails of the Rice density match those of the limiting distribution. Theorem~\ref{conjecture} provides a complete proof and shows that as $\frac\nu{\sigma^2}\rightarrow\infty$, then $R\stackrel{a}\sim\mN(\nu,2\sigma^2)$. Given the historical and practical importance of the Rice distribution, we provide another statement and proof here.
\begin{theorem}
	\label{theo:rice.asymptotics}
	Let $R$ be a random variable from the Rice density with parameters $\nu$ and scale parameter $\sigma$. As $\frac\nu\sigma\rightarrow\infty$, $\nu^s\rightarrow\infty$ at a slower rate than $\frac\nu\sigma$ for all positive integers $s$, we have that $R\stackrel{a}\sim \mN(\nu,2\sigma^2)$.
\end{theorem}
\begin{proof}
	We use our derived raw moments in our proof. From our derivation of~\eqref{eq:Rice.mus}, 
	\begin{equation*}
	\upmu_s= 2^{\frac s2}\sigma^s\Gamma\left(\frac s2+1\right) {}_1\F_1\left(-\frac s2, 1, -\frac{\nu^2}{2\sigma^2}\right).
	\end{equation*}
	On the other hand, the $s$th raw moment of a  $\mN(\mu,\sigma^2)$ random variable can be  expressed~\citep{winkelbauer14} as
	\begin{equation}
		\upmu^{(\Phi_{\mu,\sigma^2})}_s = \begin{cases}
		\mu\sigma^{s-1}2^{\frac{s+1}2} \frac{\Gamma(\frac s2+1)}{\sqrt\pi} {}_1\F_1\left(\frac{1- s}2,\frac32,-\frac{\mu^2}{2\sigma^2}\right), & \mbox{ for  odd $s\geq 0$} \\
			\sigma^s 2^{\frac s2}\frac{\Gamma(\frac{s+1}2)}{\sqrt\pi}{}_1\F_1\left(-\frac s2,\frac12,-\frac{\mu^2}{2\sigma^2}\right), & \mbox{ for even $s\geq 0$}.\\
		\end{cases}
	\end{equation}
For odd $s$, we have
\begin{equation*}
	\begin{split}
	 \left|\upmu_s - \upmu_s^{(\Phi_{\nu,2\sigma^2})}\right|
	 &\hspace{0.5in}=2^\frac s2\sigma^s \left|\Gamma\left(\frac s2+1\right){}_1\F_1\left(-\frac s2, 1,-\frac{\nu^2}{2\sigma^2}\right) -\frac{2^{\frac s2}\nu\Gamma(\frac s2+1)}{\sigma\sqrt\pi}  {}_1\F_1\left(\frac{1- s}2,\frac32,-\frac{\nu^2}{4\sigma^2}\right) \right| \\
	 &\hspace{0.5in} =  2^\frac s2\sigma^s \left|\left(\frac{\nu^2}{2\sigma^2}\right)^{\frac s2}
		{-}2^{\frac {s-1}2}\nu\left(\frac{\nu^2}{4\sigma^2}\right)^{\frac{s-1}2}\right|	
		\left\{1	+\mO\left(\frac{\sigma^2}{\nu^2}\right)\right\} = \mO\left(\frac{\sigma^2}{\nu^2}\right),
	\end{split}
\end{equation*}
where the last line follows from (13.1.5) of \citep{abramowitzandstegun64} and from using $\Gamma(\frac32)=\frac{\sqrt{\pi}}2$.
Using similar arguments for even $s$, we get that
\begin{equation*}
	\begin{split}
		\left|\upmu_s {-} \upmu_s^{(\Phi_{\nu,2\sigma^2})}\right| &{=}2^\frac s2\sigma^s \left|\Gamma\left(\frac s2+1\right){}_1\F_1\left(-\frac s2, 1,-\frac{\nu^2}{2\sigma^2}\right) {-}\frac{2^{\frac s2}\Gamma(\frac {s+1}2)}{\sqrt\pi}  {}_1\F_1\left(-\frac{s}2,\frac12,-\frac{\nu^2}{4\sigma^2}\right) \right| \\
	  & {=}2^\frac s2\sigma^s \left|\left(\frac{\nu^2}{2\sigma^2}\right)^{\frac s2}	{-}2^{\frac {s}2}\left(\frac{\nu^2}{4\sigma^2}\right)^{\frac{s}2}\right|	+\mO\left(\frac{\sigma^2}{\nu^2}\right) = \mO\left(\frac{\sigma^2}{\nu^2}\right).			
	\end{split}
\end{equation*}
Therefore, all even and odd moments of the Rice density with parameters $(\sigma,\nu)$ converge to those of $\mN(\nu,2\sigma^2)$ as $\frac\nu\sigma\rightarrow\infty$, proving the theorem.
\end{proof}

We conclude with two remarks. The condition in Theorem~\ref{theo:rice.asymptotics} may be slightly weaker than that in Theorem~\ref{conjecture}. Also, in the context of Ricean fading~\citep{abdietal01,lindsey64} the quantity $K\doteq\frac{\nu^2}{2\sigma^2}$ is often referred to as the {\em shape parameter} of the Rice distribution and along with the raw second moment $\Omega\doteq\nu^2{+}2\sigma^2$, also (perhaps somewhat confusingly to probabilists and statisticians) called the {\em scale parameter}, provides an alternative parametrization of the distribution. There also exists an alternative form of the Rice distribution due to~\citet{rangaswamyetal93}, that we characterize further in Sections~\ref{sec:IQC} and \ref{sec:back}.

\subsection{The Hoyt distribution}
\label{sec:Hoyt}
The Hoyt distribution is when $\mu_1{=}\mu_2{=}0$, $\rho{=}0$, but
$\sigma_2\geq\sigma_1$. So it is a slight generalization of the
Rayleigh distribution. For this distribution, therefore, we have
$\alpha=\frac{1}{\sigma_1\sigma_2}$,
$\beta=\frac{1}{4}\left(\frac{1}{\sigma^2_{1}}{+}\frac{1}{\sigma^2_{2}}\right)$,
$\psi=0,$
$\eta=\frac{\sigma^2_2{-}\sigma^2_1}{4\sigma^2_1\sigma^2_{2}}$ and
$\,\,\delta=\frac{\pi}{2}$. This means that the only terms that show
up in the series in the PDF is when $j{=}0$. Therefore,
using our formula, the PDF of the Hoyt distrbution
is $$f_{R}(r){=}\frac{r}{\sigma_1\sigma_2}\exp{\left\{{-}\frac{1}{4}\left(\frac{1}{\sigma^2_{1}}{+}\frac{1}{\sigma^2_{2}}\right)
    r^2\right\}}\I_{0}\left(\frac{\sigma^2_2{-}\sigma^2_1}{4\sigma^2_1\sigma^2_{2}}
  r^2\right)\!\1[r{>}0],$$
which is similar to the one specified in~\citet{aaloetal07}. Further,
the terms show up in the series expressions for the CDF, the MGF or the
moments in Section~\ref{sec:main} only when $k_1{=}j{=}0$ and $k_2\geq 0$.  In this case,
$C(0,k_2,0;\psi,\eta)=\frac{1}{4^{k_2}k_2!^2}\left(\frac{(\sigma^2_2-\sigma^2_1)^2}{16\sigma^4_1\sigma^4_{2}}\right)^{k_2}$. Further,
$\ell_{k_1,k_2}^{(j)}=2k_2$, so from Proposition~\ref{theo:altCDF}, we
get the CDF to be
\begin{equation*}
  \begin{split}
&F_R(r;\sigma_1,\sigma_2){=\frac{1}{2\sigma_{1}\sigma_{2}}
    \sum_{k_2=0}^{\infty}\frac{1}{4^{k_2}k_2!^2}\left(\frac{(\sigma^2_2-\sigma^2_1)^2}{16\sigma^4_1\sigma^4_{2}}\right)^{k_2}\!\frac{u^{4k_2+2}}{2k_2+1}
        {}_{1}\F_{1}\left(2k{+}1,2k{+}2,-\frac{1}{4}\left(\frac{1}{\sigma^2_{1}}{+}\frac{1}{\sigma^2_{2}}\right)u^2\right)},
    \end{split}
\end{equation*}
while the MGF, from Theorem~\ref{theo:mgf}, is
\begin{align*}
\M_R(t)=&\frac{2\sigma_{1}\sigma_{2}}{\sigma^2_{1}+\sigma^2_{2}}
          \sum_{k=0}^{\infty} \left(\frac{(\sigma_2^2
          -\sigma_1^2)^2}{4(\sigma_1^2{+}\sigma_2^2)^2}\right)^{k}\frac{\Gamma(2k{+}1)}{k!^2}{\times\,{}_{1}\F_{1}\left(2k{+}1,\frac{1}{2},\frac{t^2}{\frac{1}{\sigma^2_1}{+}\frac{1}{\sigma^2_2}}\right)}\\&\quad+\frac{4t\sigma^2_{1}\sigma^2_{2}}{(\sigma_{1}{+}\sigma_{2})^{3/2}}\sum_{k=0}^{\infty}\left(\frac{(\sigma_2^2{-}\sigma_1^2)^2}{4(\sigma_1^2{+}\sigma_2^2)^2}\right)^{k}\frac{\Gamma\left(\frac{2k{+}3}{2}\right)}{k!^2}{\times\,{}_{1}\F_{1}\left(\frac{2k{+}3}{2},\frac{3}{2},\frac{t^2}{\frac{1}{\sigma^2_1}{+}\frac{1}{\sigma^2_2}}\right).}
\end{align*}
Further, let $Q^{(k)}_{\sigma_1,\sigma_2} {=}
\frac1{k!}{\frac{(\sigma^2_2{-}\sigma^2_1)^{2k}}{4^k\sigma^{4k}_1\sigma^{4k}_{2}}}$.
Also, define the {\em generalized rising factorial} by $x_{(a)}^{(n)}{\doteq}
x(x+a)(x+2a)\hdots(x+(n{-}1)a).$ Then, since $\psi{=}0$ and
$_1\F_1(a,b,0){=}1,$ Corollary~\ref{theo:mus} yields
\begin{align*}
  \upmu_s{=}\frac{1}{\sigma_1\sigma_2}\sum_{k=0}^{\infty}{k!^2\,}\frac{Q^{(k)}_{\sigma_1,\sigma_2}}{16^kk!}
              \Gamma\left(2k{{+}}\frac{s}2{{+}}1\right)\frac{2^{4k{+}s{+}1}}{\left(\frac{1}{\sigma^2_{1}}{+}\frac{1}{\sigma^2_{2}}\right)^{(2k{{+}}1{{+}}\frac{s}{2})}}
  =&\frac{1}{\sigma_1\sigma_2}\frac{2^{s{+}1}\Gamma\left(\frac{s}{2}{+}1\right)}{\left(\frac{1}{\sigma^2_1}{+}\frac{1}{\sigma^2_2}\right)^{1{+}\frac{s}{2}}}\sum_{k=0}^{\infty}\frac{\left(\frac
     s2{+}1\right)^{(2k)}_{(1)}}{k!} Q^{(k)}_{\sigma_1,\sigma_2}\\
  =&\frac{(2\sigma_1\sigma_2)^{s{+}1}}{\left(\sigma^2_1{+}\sigma^2_2\right)^{1{+}\frac{s}{2}}}\Gamma\left(\frac{s}{2}{+}1\right)\sum_{k=0}^{\infty}\frac{2^{2k}\left(\frac
     s4
     \right)^{(2k{+}1)}_{(\frac12)}}{k!}\frac{\left(\frac{(\sigma^2_2{-}\sigma^2_1)^2}{4(\sigma^2_1{+}\sigma^2_{2})^2}\right)^{k}}{k!\,}\\
  =&\frac{(2\sigma_1\sigma_2)^{s{+}1}\Gamma\left(\frac{s}{2}{+}1\right)}{\left(\sigma^2_1{+}\sigma^2_2\right)^{1{+}\frac{s}{2}}}\sum_{k=0}^{\infty}\!\!\frac{\left(\frac s4{+}\frac12\right)^{(k)}_{(1)}\left(\frac s4{+}1\right)^{(k)}_{(1)}}{k!}\frac{\left(\frac{\sigma^2_2{-}\sigma^2_1}{\sigma^2_1{+}\sigma^2_{2}}\right)^{2k}}{k!\,}\\=&\frac{(2\sigma_1\sigma_2)^{s{+}1}}{\left(\sigma^2_1{+}\sigma^2_2\right)^{1{+}\frac{s}{2}}}\Gamma\left(\frac{s}{2}{+}1\right){}_{2}\F_{1}\left(\frac{s}{4}{+}\frac{1}{2},\frac{s}{2}{+}1,1,\frac{(\sigma^2_2{-}\sigma^2_{1})^2}{(\sigma^2_1{+}\sigma^2_{2})^2}\right),   
\end{align*}
where $_2\F_1(a,b,c,z)$ is the hypergeometric function~\citep{erdelyietal53,whittakerandwatson1915}, and we thus arrive at a formula that matches the independently-derived formula in~(7) of ~\citet{coluccia13}.
\subsection{The Beckmann distribution}
\label{sec:Beckmann}
The Beckmann distribution is a special case of the generalized form,
 when  $\rho{=}0$. Without loss of generality (WLOG), let $\sigma_2{\geq}\sigma_1$. Then we have
$\alpha{=}\frac{1}{\sigma_1\sigma_2}\exp{\left\{-\frac{\mu^2_1}{2\sigma^2_{1}}{+}\frac{\mu^2_2}{2\sigma^2_{2}}\right\}}$,
$\delta{=}\tilde\phi{-}\frac{\pi}{4}
,$
$\beta{=}\frac{1}{4}\left(\frac{1}{\sigma^2_{1}}{+}\frac{1}{\sigma^2_{2}}\right),\,\,
\psi{=}\frac{\sqrt{\mu^2_1\sigma^4_{2}+\mu^2_2\sigma^4_{1}}}{\sigma^2_1\sigma^2_{2}}\,$
and $\eta{=}\frac{\sigma^2_2-\sigma^2_1}{4\sigma^2_1\sigma^2_{2}}$. 
Here, $\tilde\phi$ is such that $\cos\tilde\phi = \mu_2\sigma_1^2/\sqrt{\mu_1^2\sigma_2^2 + \mu_2^2\sigma_1^2}$ and $\sin\tilde\phi = \mu_1\sigma_2^2/\sqrt{\mu_1^2\sigma_2^2 + \mu_2^2\sigma_1^2}$ 
Then, all terms involving all $k_1,k_2,j$ contribute to the
series. Further, 
\begin{equation}
  \begin{split}
    C(k_1,k_2,j;\psi,\eta){=}\frac{\left(\frac{\mu^2_1\sigma^4_{2}{+}\mu^2_2\sigma^4_{1}}{\sigma^4_1\sigma^4_{2}}\right)^{k_{1}}\left(\frac{(\sigma^2_2{-}\sigma^2_1)^2}{16\sigma^4_1\sigma^4_{2}}\right)^{k_2}}{4^{k_1{+}k_2}k_1!(k_1{+}2j)!k_2!(k_2{+}j)!}.
    \end{split}
  \label{eq:Beckmann.C}
\end{equation}
There are no further simplifications of the formulae for the PDF, CDF,
MGF or moments possible here, so we refer back to
Section~\ref{sec:main} for the specific formulate for each of these
quantities, with the above values inserted in those formulae.

\subsection{The identical quadrature components (IQC) model}
\label{sec:IQC}
This case occurs when the underlying complex Gaussian random variable
has identical but correlated quadratures, that is,
$\sigma_1{=}\sigma_2{\equiv}\sigma$. Then, writing $\mu_1=\nu\cos\xi$ and $\mu_2=\nu\sin\xi$ and following the definitions in Section~\ref{sec:fns}, 
$\alpha = 2\beta\exp{\left\{-\beta\nu(1-\rho\sin2\xi )\right\}}$, with 
$\beta{=}\frac12\sigma^{-2}(1{-}\rho^2)^{-1}$, $\eta{=}\rho\beta$, 
$\psi = 2\beta\nu\sqrt{1+\rho^2-2\rho\sin2\xi}$
and $\delta{=}\tilde\phi{-}\frac{\pi}{4}$, where 
$\tilde\phi$ satisfies $\cos\tilde\phi=\frac{\sin\xi-\rho\cos\xi}{\sqrt{1+\rho^2-2\rho\sin2\xi}}$, and $\sin\tilde\phi=\frac{\cos\xi-\rho\sin\xi}{\sqrt{1+\rho^2-2\rho\sin2\xi}}$. 
Then $\cos2j\delta = (-1)^{\frac{j-1}2}\sin2j\tilde\phi$ when $j$ is odd and $\cos2j\delta = (-1)^{\frac j2}\cos2j\tilde\phi$ when $j$ is even. From de Moivre's formula, the binomial series expansion and equating the real and imaginary parts, we have the following representations for 
$
\cos n\theta  =\sum_{k=0}^{\lfloor \frac n2 \rfloor}(-1)^{k}{n \choose 2k}\cos^{n-2k}\theta \sin^{2k}\theta,
$
\mbox{ and }
$
\sin n\theta  =\sin\theta\sum_{k=0}^{\lfloor \frac {n-1}2 \rfloor}(-1)^{k}{n \choose 2k+1}\cos^{n-2k-1}\theta \sin^{2k}\theta
	$
	where we use $\lfloor x \rfloor$ to denote the largest integer not exceeding $x$. Therefore $\cos2j\delta$ can be written in terms of a series involving the product of powers of $\sin2\tilde\phi$ and $\cos2\tilde\phi$. From the definition of $\tilde\phi$ and some trigonometry identities, $\sin2\tilde\phi = \frac{(1+\rho^2)\sin2\xi-2\rho}{1+\rho^2-2\rho\sin2\xi}$ and $\cos2\tilde\phi =  \frac{-(1-\rho^2)\cos2\xi}{1+\rho^2-2\rho\sin2\xi}$.
	Also,  
$$C(k_1,k_2,j;\psi,\eta){=}\frac{\nu^{2k_1}\rho^{2k_2}\beta^{2k_1+2k_2}(1+\rho^2-2\rho\sin2\xi)^{k_1}}{4^{k_2}k_1!(k_1{+}2j)!k_2!(k_2{+}j)!}.$$ As in the Beckmann case, no additional simplifications in the 
formulae from Section~\ref{sec:main} are generally possible and these values are
inserted to get expressions for the different quantities. 
\subsubsection{Identifiability issues}
The polar representation of $\mu_1,\mu_2$ and the consequent specification of $\alpha,\beta,\delta,\eta,\psi$ allows us to see additional identifiability issues in the parameters.

\subsubsection{Special cases} When additionally $\mu_1{=}\mu_2{\equiv}\mu$, we get $\psi{=}\sqrt2\mu\sigma^{-2}(1{+}\rho)^{-1},$ 
$\delta{=}0$, and
$$C(k_1,k_2,j;\psi,\eta)=\frac{\mu^{2k_1}\rho^{2k_2}}{4^{k_1{+}2k_2}k_1!k_2!(k_1{+}j)!(k_2{+}2j)!\sigma^{4k_1+k_2}(1{-}\rho^2)^{k_1+k_2}}.$$
In this  case, the PDF is
\begin{equation*}
  \begin{split}
  f_R(r;\mu,\sigma,\rho){=}\frac r{\sigma^2\sqrt{1{-}\rho^2}}&
\exp{\left\{-\frac{\mu^2(1{-}\rho)
                          {+}r^2}{2\sigma^2(1{-}\rho)^2}\right\}}
    {
\sum_{j=0}^\infty\I_{2j}\left( \frac{\sqrt{2}\mu  r}{\sigma^2(1{+}\rho)}  \right)
\I_j\left( \frac{\rho  r^2}{2\sigma^2(1{-}\rho^2)}  \right)\1[r{>}0].}
\end{split}
    \label{eq:gen.Rice}
\end{equation*}
When additionally $\mu{=}0$, we get another generalization of
the Rayleigh distribution, because then the only terms involving
$j$ in the series contribute when $j{=}0$. In that case, the PDF is
\begin{equation*}
  \begin{split}
  f_R(r;\sigma,\rho){=}\frac r{\sigma^2\sqrt{1{-}\rho^2}}
\exp{\left\{-\frac{r^2}{2\sigma^2(1{-}\rho)^2}\right\}}
    \I_0\left( \frac{\rho  r^2}{2\sigma^2(1{-}\rho^2)}  \right)\1[r{>}0],
    \end{split}
\label{eq:gen.Rayleigh}
\end{equation*}
or an alternative form of the Rice
PDF derived by~\citet{rangaswamyetal93}. 
We subsume further discussion of this case in the next section.
\subsection{The background signal-free model}
\label{sec:back}
The background case is a slight generalization of the Hoyt density, 
and  occurs when $\mu_1{=}\mu_2{=}0$ in our generalized Beckmann
setup. Then,
$\alpha{=}\frac{1}{\sigma_1\sigma_2\sqrt{1-\rho^2}},\allowbreak\beta{=}\frac{\sigma_1^2{+}\sigma_2^2}{4\sigma_1^2\sigma_2^2(1-\rho^2)},\,\psi{=}0,$
and $\eta{=}\frac{\sqrt{(\sigma^2_2-\sigma^2_1)^2+4\sigma^2_1\sigma^2_2\rho^2}}{4\sigma^2_1\sigma^2_2(1-\rho^2)}.$ Also $\tilde{\phi}{=}\frac{\pi}{2}$ and
$\phi{=}\text{arctan}\left(\frac{2\rho\sigma_1\sigma_2}{\sigma^2_{2}-\sigma^2_{1}}\right)$. Then
$k_1{=}0,j{=}0$ is the only time that the terms in the series
contribute. Therefore,
\begin{align*}
  f_{R}&(r;\sigma_1,\sigma_2,\rho){=}\frac{r}{\sigma_1\sigma_2\sqrt{1{-}\rho^2}},\exp{\left(-\frac{(\sigma_1^2{+}\sigma_2^2)r^2}{4\sigma_1^2\sigma_2^2(1{-}\rho^2)}\right)}\I_0\left(\frac{r^2\sqrt{(\sigma^2_2{-}\sigma^2_1)^2{+}4\sigma^2_1\sigma^2_2\rho^2}} {4\sigma^2_1\sigma^2_2(1{-}\rho^2)}\right)\1[r{>}0].
\end{align*}
Also
$C(0,k,0;0,\eta){=} \frac{1}{64^k(k!)^2}\frac{\left\{(\sigma^2_2{-}\sigma^2_1)^2{+}4\sigma^{2}_1\sigma^{4}_2\rho^2\right\}^k}
  {\sigma^{4k}_1\sigma^{4k}_2(1{-}\rho^2)^{2k}},$ and
from Proposition~\ref{theo:altCDF}, we get the CDF
\begin{equation}
  \begin{split}
    F_R(u;\sigma_1,\sigma_2,\rho)&{=}\frac{u^2}{2\sigma_1\sigma_2\sqrt{1{-}\rho^2}}
      \sum_{k{=}0}^\infty\Bigg\{
                                   \frac{u^{4k}}{64^k(k!)^2(2k{+}1)}{\frac{\left\{(\sigma^2_2{-}\sigma^2_1)^2{+}4\sigma^{2}_1\sigma^{4}_2\rho^2\right\}^k}    {\sigma^{4k}_1\sigma^{4k}_2(1{-}\rho^2)^{2k}}}{
{}_1\F_1\left(2k+1,2k+2, -\frac{u^2(\sigma_1^2{+}\sigma_2^2)}{4\sigma_1^2\sigma_2^2(1-\rho^2)}\right)\Bigg\},}
  \end{split}
  \label{eq:back.CDF}
\end{equation}
and the MGF, from Theorem~\ref{theo:mgf}, is
\begin{align*}
  \M_R(t)=\frac{2\sigma_{1}\sigma_{2}\sqrt{(1-\rho^2)}}{\sigma^2_{1}+\sigma^2_{2}}\sum_{k=0}^{\infty}\frac{\left\{(\sigma^2_2{-}\sigma^2_1)^2{+}4\sigma^{2}_1\sigma^{2}_2\rho^2\right\}^k}
            {4^k(k!)^2(\sigma_1^2{+}\sigma_2^2)^{2k}}
            & \left\{ \Gamma(2k{+}1)\,
            {}_{1}\F_{1}\left(2k{+}1,\frac{1}{2},\frac{t^2\sigma_1^2\sigma_2^2(1{-}\rho^2)}{\sigma^2_1{+}\sigma^2_2}\right)
\right.
  \\&
  \pushright{{+}
  \frac{2t\sigma_{1}\sigma_{2}(1-\rho^2)^{1/2}}{\sqrt{(\sigma^2_{1}+\sigma^2_{2})}}
  \Gamma\left(2k{+}\frac{3}{2}\right)\,}\\ &
\pushright{{}_{1}\F_{1}\left(2k{+}\frac{3}{2},\frac{3}{2},
\frac{t^2\sigma_1^2\sigma_2^2(1{-}\rho^2)}{\sigma^2_1{+}\sigma^2_2}  \right)\Bigg\}.}
\end{align*}
Finally, Corollary~\ref{theo:mus} gives
\begin{equation}
  \begin{split}
   \upmu_s &=\frac{2^{s+1}(\sigma_1\sigma_2)^{1+s}(1{-}\rho^2)^{\frac{s{+}1}2}}{
             (\sigma_1^2{+}\sigma_2^2)^{\frac s2{+}1} }{\sum_{k=0}^{\infty}\frac{1}{4^k(k!)^2}\frac{\left\{(\sigma^2_2{-}\sigma^2_1)^2{+}4\sigma^{2}_1\sigma^{2}_2\rho^2\right\}^k}
             {(\sigma_1^2+\sigma_2^2)^{2k}}
             \Gamma\left(2k{+}\frac{s}2{+}1\right).}\\
             &= \frac{2^{s+1}(\sigma_1\sigma_2)^{1+s}(1{-}\rho^2)^{\frac{s{+}1}2}}{
             (\sigma_1^2{+}\sigma_2^2)^{\frac s2{+}1} }\Gamma\left(\frac{s}2{+}1\right)\,{{}_{2}F_{1}\left(\frac{s}4{+}\frac{1}2,\frac{s}4{+}1,1,\frac{(\sigma^2_2{-}\sigma^2_1)^2{+}4\sigma^{2}_1\sigma^{2}_2\rho^2}           {(\sigma_1^2+\sigma_2^2)^{2}}\right).}
\end{split}
  \label{eq:nosignal.mus}
\end{equation}
For $\rho{=}0,$ the above reduces to the Hoyt moments in
Section~\ref{sec:Hoyt} or in \citet{coluccia13}. The same  holds
for the PDF, the CDF and MGFs which reduce to the forms in
Section~\ref{sec:Hoyt}. Further, for the special case of
$\sigma_1{=}\sigma_2{\equiv}\sigma$, and non-zero $\rho$, the CDF
in \eqref{eq:back.CDF} reduces to
\begin{equation}
  \begin{split}
    F_R(u;\sigma,\rho)&{=}\frac{u^2}{2\sigma^2\sqrt{1{-}\rho^2}}
    \sum_{k{=}0}^\infty\frac1{4^k(k!)^2}\frac{u^{4k}}{2k+1}\rho^{2k}{
    {}_1F_1\left(2k+1,2k+2,
    -\frac{u^2}{2\sigma^2(1-\rho^2)}\right),}
    \end{split}
    \label{eq:ranga.CDF}
\end{equation}
while the MGF is
\begin{equation}
  \begin{split}
\M_R(t){=} \sqrt{(1-\rho^2)}\sum_{k=0}^{\infty}&\frac{\rho^{2k}}{4^{k}(k!)^2}
            \Bigg\{
                                                 \Gamma(2k{+}1)\,{{}_{1}F_{1}\left(2k{+}1,\frac{1}{2},\frac{t^2\sigma^2(1{-}\rho^2)}{2}\right)    }\\
    &\pushright{{+} t\sigma\sqrt{2(1-\rho^2)}
      \Gamma\left(2k{+}\frac{3}{2}\right)
  {}_{1}F_{1}\left(2k{+}\frac{3}{2},\frac{3}{2},
    \frac{t^2\sigma^2(1{-}\rho^2)}{2}\right)\Bigg\},}
  \end{split}
      \label{eq:ranga.MGF}
  \end{equation}
  and the $s$th raw moment is
  \begin{equation}
    \upmu_s 
    =2^{\frac  s2}(1{-}\rho^2)^{\frac{s{+}1}2}\sigma^{s}\Gamma\left(\frac{s}2{+}1\right){}_{2}F_{1}\left(\frac{s}{4}{+}\frac{1}{2},\frac{s}{4}{+}1,1,\rho^2\right).
    \label{eq:ranga.mus}
  \end{equation}
  We thus obtain, through~\eqref{eq:ranga.CDF}, \eqref{eq:ranga.MGF}
  and  \eqref{eq:ranga.mus}, further characterization of the
  alternative form of the Rice distribution of~\citet{rangaswamyetal93}.

\section{Simulation Studies}
\label{sec:simulations}
This section reports the results of simulation experiments performed to evaluate our derived results. There are two series of experiments whose results we report here. We first evaluate accuracy, performance and behavior of the density formula in~\eqref{eq:2d.PDF}and its limiting behaviour described in Section~\ref{sec:approximations}  while the second set evaluated the accuracy and computational efficiency of the moments formula~\eqref{eq:mus}. Following the discussion in Section~\ref{sec:identifiability}, we used the IQC distribution in all our experiments.
\subsection{Distributional assessments}
\label{subsec:distribution}
We first studied the performance of the density formula in Result~\ref{theo:2d.PDF}. We simulated 1 million realizations from the second order IQC distribution at different sets of parameters, and compared the simulation-obtained relative frequency histograms with the fit provided by the density formula~\eqref{eq:2d.PDF}. Figure~\ref{fig:densityplot}
\begin{figure}[h]
	\includegraphics[width=\textwidth]{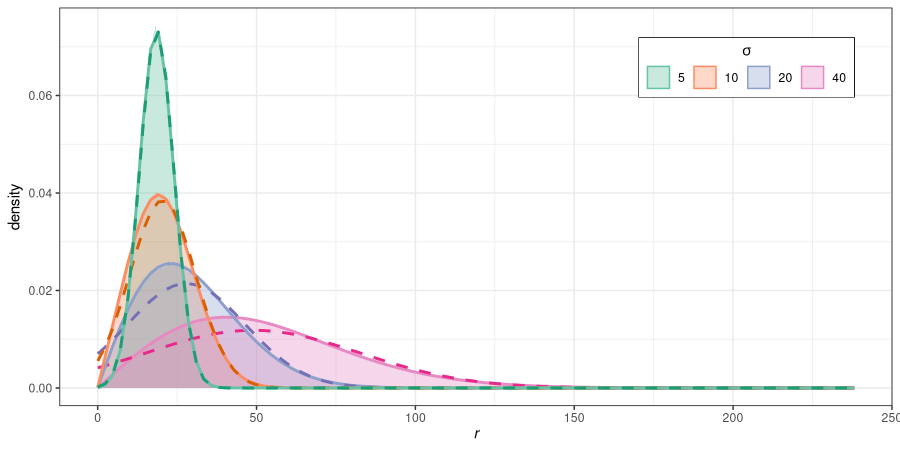}
  \caption{Relative frequency histograms of simulated values of $R$ and calculated densities of the IQC distribution for $\mu_1=15,\mu_2=10,\rho=0.25$ and four different values of $\sigma$. Densities calculated using \eqref{eq:2d.PDF} are in solid lines, while densities calculated using Theorem~\ref{conjecture} are in dashed lines.}
	\label{fig:densityplot}
\end{figure}
summarizes the results for $\mu_1=15,\mu_2=10,\rho=0.25$ and $\sigma=5,10,20,40$. The fit obtained by Result~\ref{theo:2d.PDF} and displayed by the solid line the figure  is remarkably good and calculated almost immediately. In concordance with Theorem~\ref{conjecture}, for lower values of $\sigma$, the density increasingly resembles a normal distribution. This leads us to our next set of experiments~\ref{sec:assessments} that assess when the second order IQC distribution can essentially be approximated by an univariate normal distribution, and an evaluation of the mean and variance parameters. 

\subsubsection{The Gaussian limiting distribution}
\label{sec:approximations}

We simulated 1 million realizations each from the second order IQC distribution for $\sigma=1$, and each of $(\mu_1,\mu_2) \in \{0,1,\ldots,250\}\times\{0,1,\ldots,250\}$ and $\rho=0.25,0.5,0.75$, and then performed an Anderson-Darling test~\citep{andersonanddarling52,andersonanddarling54,marsagliaandmarsaglia04,stephens86,thode02} for normality. We eschewed the Shapiro-Wilk's test~\citep{shapiroandwilk65} here because of the sample size limitation of 5000~\citep{royston95,rahmanandgovindarajulu97} in current implementations of that test. Figure~\ref{fig:Gaussfits} shows the false discovery rates ($q$-values) obtained after adjusting the $p$-values~\citet{benjaminiandhochberg95}.
\begin{figure}[h]
	\includegraphics[width=\textwidth]{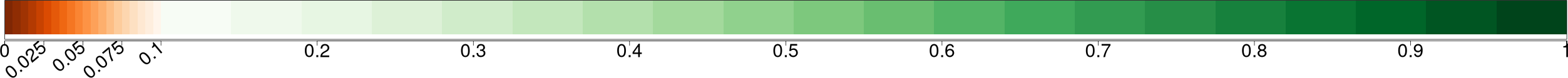}
	\mbox{\subfloat[$\rho=0.25$]{\includegraphics[width=0.333\textwidth]{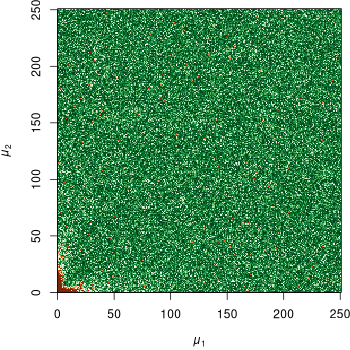}}
	\subfloat[$\rho=0.5$]{\includegraphics[width=0.333\textwidth]{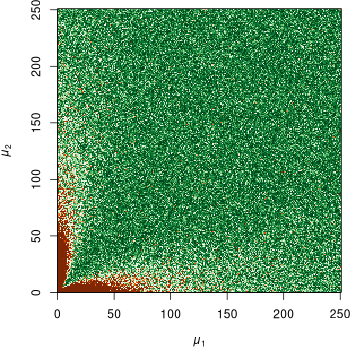}}
	\subfloat[$\rho=0.75$]{\includegraphics[width=0.333\textwidth]{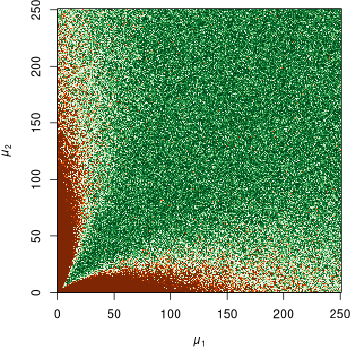}}}
	\caption{The $q$-values, after controlling for false discoveries~\citep{benjaminiandhochberg95},  obtained upon fitting a Anderson-Darling test for normality to realizations from the second order IQC distribution with $\mu_1,\mu_2\in [0,250] $, $\sigma=1$ and $\rho=0.25,0.5,0.75$.}
	\label{fig:Gaussfits}
\end{figure}
For larger $\mu_1$ or $\mu_2$, the Gaussian distribution provides a good fit to the data. Further, larger values of $\rho$ dampen the quality of the approximation in a nonlinear fashion, as we would expect, given the manner in which the parameter enters into $\frac{\psi^2}{\beta}$. 

We investigated the validity of the parameters~\eqref{eq:mu} and~\eqref{eq:tau} in the limiting Gaussian distribution that approximates the second order IQC distribution. We simulated 100,000 realizations of $\mu_1,\mu_2$ as realizations from the $\mU(-250,250)$ distribution, $\sigma$ from the $\mU(0,100)$ and $\rho$ from the $\mU(-1,1)$ distributions. For each simulated $(\mu_1,\mu_2,\sigma,\rho)$, we obtained 1 million realizations from the second order IQC distribution with those parameters, and then obtained the $p$-value (and thence the $q$-value) of the Anderson-Darling test for the goodness of fit of the Gaussian distribution to these simulated data. 

\subsubsection{The limiting distribution for large $\rho$}
Figure~\ref{fig:largerhodensityplot}
displays the relative frequency distribution of $R$ as $\rho$ increases. We see 
\begin{figure}[h]
	\includegraphics[width=\textwidth]{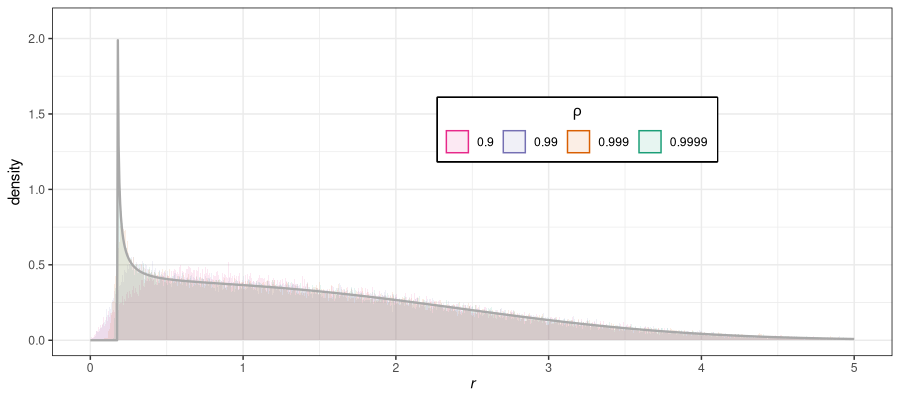}
	\caption{Relative frequency histograms of simulated values of $R$ and calculated densities of the IQC distribution for $\mu_1=1,\mu_2=0.75,\sigma=1$ and four different values of $\rho$. The limiting density calculated using \eqref{eq:rho.infty} is displayed by the solid line.}
	\label{fig:largerhodensityplot}
\end{figure}
that as $\rho$ increases, the relative frequency distribution closely resembles the density of \eqref{eq:rho.infty}, as expected given the theoretical derivation of Theorem~\ref{theo:rho.infty}.

\subsection{Assessment of the moments formula}
\label{sec:assessments}
As alluded to in Section~\ref{sec:mus.asymptotics}, the moments formula can sometimes be numerically unstable. We therefore first evaluate the regions of the parameter space where the calculations are unstable, and also whether in those cases, the moments can be substituted by those of the limiting distribution, and in general, the accuracy of the moments formulae for all the cases. Since $\sigma$ is a scale parameter for the second order IQC distribution, it is enough to restrict our investigations to the case with $\sigma=1$. Also, our investigations focused on moments of order $s=1,3$ because the first four moments are needed in the MOM estimation of parameters of the second order IQC distribution, and following \eqref{eq:simple.mu2} and \eqref{eq:simple.mu4} , there exist exact and easily calculated even-ordered moments of the second order IQC distribution. So it is enough to study performance of our moments formula for the first and third ordered moments.

\subsubsection{Numerical stability of formula}
  \label{sec:numerical}
  We simulated 100,000 realizations of $(\nu,\xi,\rho)$ uniformly from the region $(0,10){\times}(0,\frac\pi2)\times(-1,1).$
  For each simulated $(\nu,\xi,\rho)$, we evaluated stability of our moments formula by simply evaluating whether $\eqref{eq:mus}$ for $s=0$ was more than $\epsilon=10^{-6}$ away from unity, in which case we declared the calculated formula as numerically unstable. The result was then fed into a dense binary classification {\em stability} tree with predictors that included $\nu,\xi,\rho,\alpha,\beta,\frac{\psi^2}{\beta},\frac\eta{\beta},\delta, \lambda, \zeta,\epsilon$ 
  to provide us with a description of the parameter space where the moments formula~\eqref{eq:mus} is unstable. 

  Our next step is to obtain a decision rule for the cases where \eqref{eq:mus} is potentially unstable.We simulated 250,000 realizations of $(\nu,\xi,\rho )$ uniformly over $(0,10){\times}(0,\frac\pi2)\times(-1,1)$, and ran each realization down the stability tree to decide if \eqref{eq:mus} was predicted to be stable or not. For those cases that were deemed to be unstable (the unstable configuration sample) we compared the Monte Carlo estimated mean  (with Monte Carlo sample size of 1 million) with the formula for mean obtained by the Gaussian approximation of Theorem~\ref{conjecture} \eqref{eq:Rice.mus}, 
  \eqref{eq:nosignal.mus}, as well as \eqref{eq:chi.odd.odd.mus} for $k=1$, as well as with reductions for  negligible $\lambda$, $\zeta$ or $\epsilon$. 
  We categorized the method of calculation to be one of each of these methods based on which was the closest to the Monte Carlo estimate. A separate but similar categorization for the third order moment was obtained from another 250,000 realizations. For the combined unstable configuration sample, we fit another dense (instability) classification tree to come up with a decision for which formula to use in our calculations. We note that our moments calculation formula is therefore as follows: we first scale the setup to have $\sigma=1$ and then run $(\nu,\xi,rho)$ down the stability tree, and use \eqref{eq:mus} to calculate the moment of the second order IQC distribution if the configuration is deemed stable, otherwise we run it down the instability classification tree to decide which of the approximate moments formulae to use in our calculations. We now evaluate the computational speed and numerical accuracy of our moments setup.
\subsubsection{Computational speed and numerical accuracy}
  We evaluated computational speed and numerical accuracy by simulating 200,000 realizations  of $(\nu,\xi,\rho)$ uniformly from the region $(0,10){\times}(0,\frac\pi2)\times(-1,1),$ and used our moments formulae as per Section~\ref{sec:numerical} to obtain the mean for half of cases, and the third moment for the other half.
  The gold standard for these moments formulae was the corresponding Monte Carlo estimated mean and third moment, from Monte Carlo samples of size $10^6$. The calculation time for each case was also computed and compared with the Monte Carlo estimates. The compute time savings  upon using our formulae relative to that provided by the Monte Carlo estimate is between 36.1\% and 96.6\%, with at least 99.9\% of the cases realising a relative time saving of at least 75.4\%. In terms of accuracy, the relative difference between our calculated moments and the Monte Carlo estimates were between -3.02\% and 3.90\%, with 99.9\% of these  relative differences lying between -1.03\% and 1.15\%. Thus we see that our moments formulae are computationally faster than Monte Carlo estimation and have high accuracy.

\section{Application to parameter estimation}
\label{sec:applications}
We apply our moments formulae to obain MOM estimates of the IQC model parameters from a magnitude Magnetic Resonance Imaging (MRI) dataset imaged under different conditions, and then similarly evaluate if the quadratures of an IQC model fit to wind speed data are correlated. Both applications  have traditionally used a Rice model and our objective here is obtain a preliminary assessment of whether a IQC model may be a better possibility. To our knowledge, such a model has never been fit in either case though authors have expressed concerns on the adequacy of current models in either application~\citep{bestetal10,bailetal11,baggioandmuzy24,adrianetal25}. Our broad approach in both cases is to obtain IQC model MOM parameter estimates from the data and then obtain confidence intervals of the correlation parameter in order to assess its significance.

\subsection{MRI Phantom data}
Although the magnetic resonance (MR) signal is complex-valued, current practice routinely discards the phase of the signal and only works with its magnitude at each pixel. The complex-valued signal is well-modeled by the complex Gaussian distribution~\citep{wangandlei94} and so the magnitude MR signal has been assumed to be Rice-distributed without consideration of the possibility that the underlying complex Gaussian signal may actually have a non-spherical dispersion matrix. Here we evaluate whether the magnitude MR signal is indeed Rice or has a second order IQC distribution, noting again that from an estimation perspective, the second order IQC distribution can not be distinguished from the Beckman distribution.

Our illustration is on a physical phantom scanned using a spin-echo paradigm that  provides high-resolution images but is time-consuming~\citep{hennigetal86,deonietal05,warntjesetal08} to obtain. Two user-controlled design parameters (repetition time, or TR and echo time, or TE) modulate the contribution of three underlying physical quantities (spin-lattice or longitudinal relaxation time, spin-spin or transverse relaxation time, and proton density) that make up the magnitude resonance (MR) signal that are only observed through their noise-contaminated Bloch-transformed measurements~\citep{maitraandbesag98,maitraandriddles10,paletal21,paletal23}. 
Our study obtained 2D images of the phantom at 18 (TE,TR) settings of $\{30,40,50,60,80,100\}\times\{1,2,3\}$ where the TE parameters are specified in milliseconds, and the TRs are in seconds. Figure~\ref{fig:phantom1} displays the scanned $256{\times}256$ image obtained at  the $\mbox{(TE,TR)} = (30,1)$ settting. 

\begin{figure}[t]
  \subfloat[]{\label{fig:phantom1}\includegraphics[width=0.25\linewidth,valign=c]{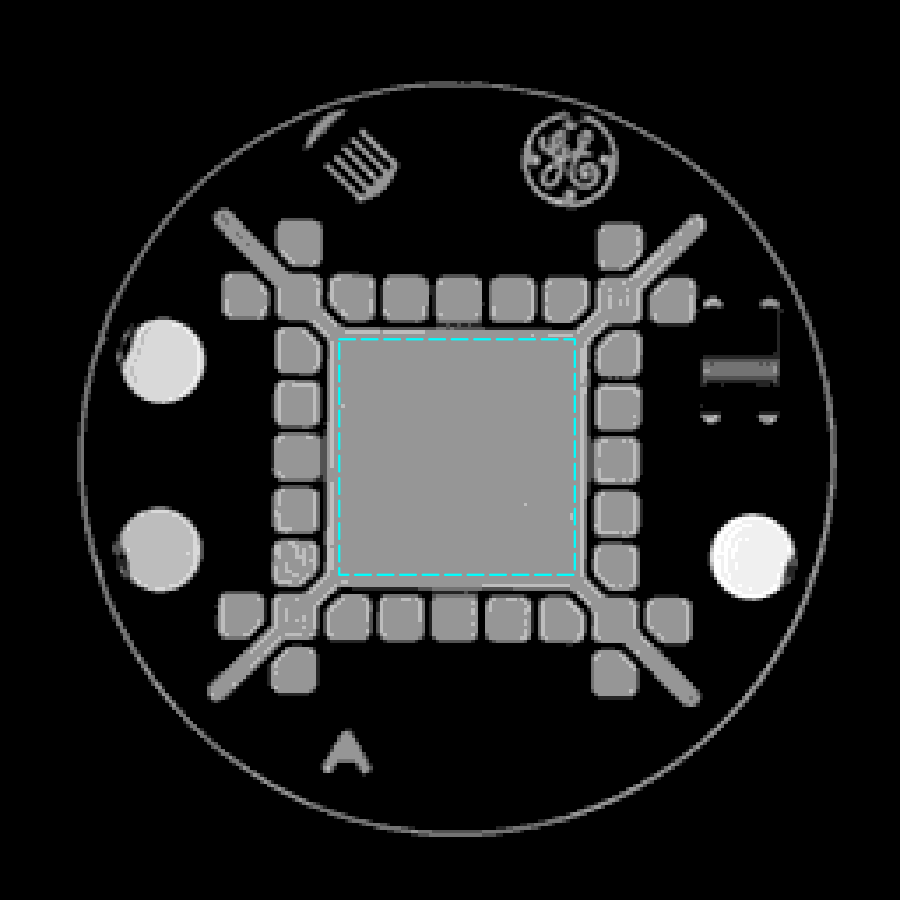}}
  \subfloat[]{\label{fig:mri1rho}\includegraphics[width= 0.75\linewidth,valign=c]{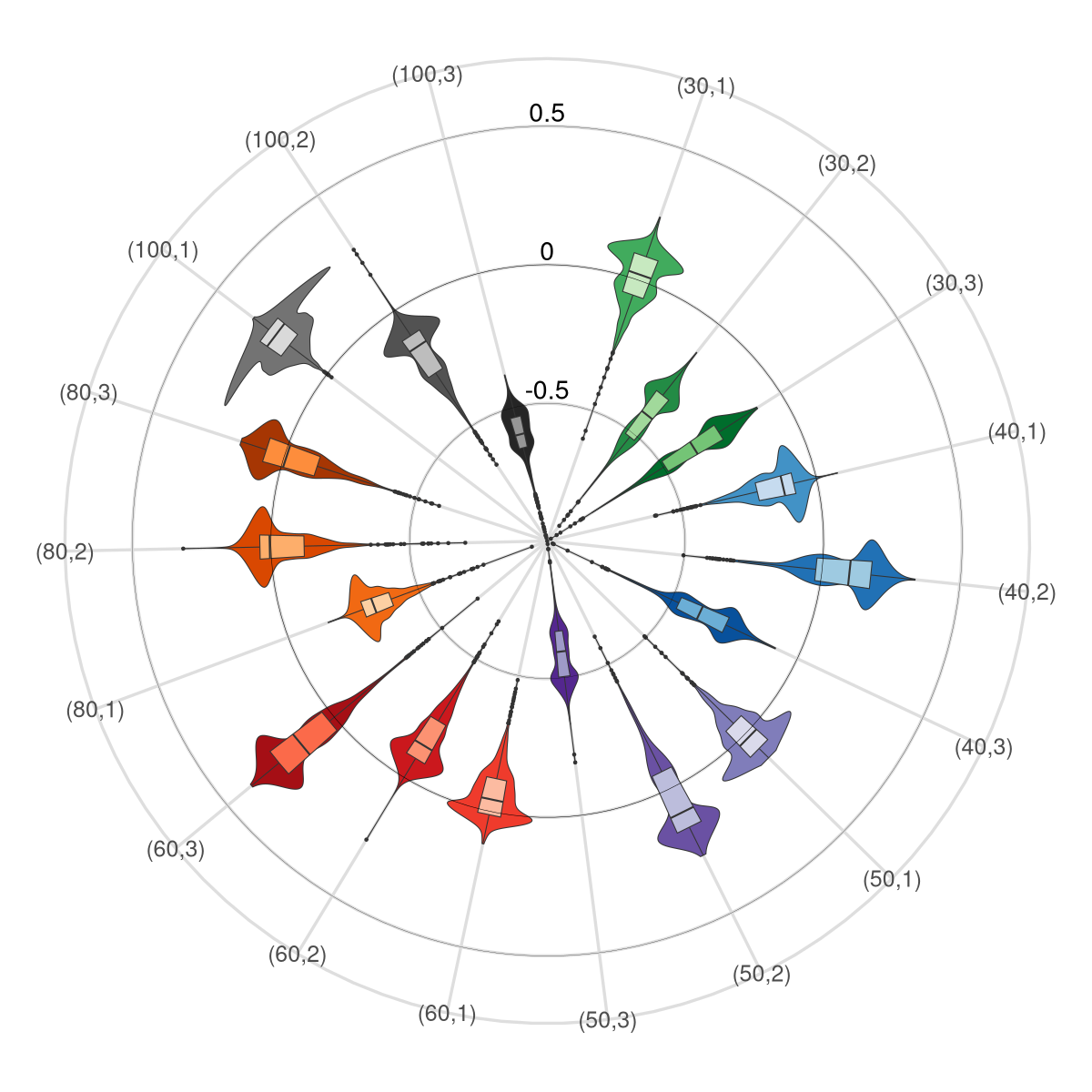}}
  \caption{(a) Magnitude 2D MR images of the physical phantom scanned using a spin-echo imaging sequence scanned with TE = 30 milliseconds and TR = 1. The highlighted square region  is from where pixels were sampled to obtain MOM parameter estimates. (b) Distribution of the M0M-obtained $\hat\rho$ for each of the 18 design parameter settings.}
  \label{fig:mri1}
\end{figure}

We obtained a large homogeneous $68{\times}68$ region (see highlighted square region in  Figure~\ref{fig:phantom1}) that extended from the 97th to the 164th pixel coordinate in either direction. We took 1,000 samples of 500 randomly sampled pixels (without replacement) from this region, in order to reduce spatial dependence between the observations, and also to assess variability in our estimates.
For each sample, we simultaneously obtained MOM estimators of $(\nu,\sigma,\rho)$ parameters assuming a second order IQC model.
Figure~\ref{fig:mri1rho} provides the distribution of the estimated $\rho$ from each sample though a violinplot superimposed by a boxplot.
We see that the Rice distribution may be adequate for some settings in that the distribution of $\hat\rho$ includes the origin (or when the second order IQC distribution reduces to the Rice distribution), however, there are many settings for which $\hat\rho$ is significantly different than 0. Our analysis here has used MOM estimators that is customarily used~\citep{sijbers98,sijbersetal98,sijbersetal07}, but one could also do similar analysis using maximum likelihood estimation methods implemented using an expectation-maximization (EM) algorithm~\citep{maitraandfaden09,maitra13}. Our MOM estimators could still provide informative initializers to the EM algorithms, which also would need the density of Result~\ref{theo:2d.PDF}. However, such detailed analysis is perhaps outside the scope of this paper. 

\subsection{Modeling wind speed}
The distribution of wind speed can also be regarded as the magnitude or envelope of a bivariate Gaussian random vector~\citep{muzyetal10,bailetal11,bailetal11b}. \citet{bestetal10} analyzed daily wind speeds (in kilometres/hour) for the month of 2007 at the north-eastern Sydney suburb of Elanora Heights in Australia and showed that the Rayleigh distribution was inadequate in modeling this particular dataset. After expressing concerns over the Rayleigh and Rice distributions in modeling wind speed, \citet{bailetal11} analyzed the wind speeds at 27 Netherlands locations using a M-Rice distribution (or the Rice distribution with a scale parameter that is itself lognormally distributed), and found that model to also be inadequate.

Our dataset is the 30 daily wind speed measurements provided in~\citet{bestetal10}. 
\begin{table}[h]
  \caption{Estimated parameters of the second order IQC distribution upon fitting to the Sydney daily wind speeds of November 2007.}\label{tab:Sydney-wind}
  \begin{center}
  \begin{tabular}{ccccc}\hline
    Parameter & $\nu$ & $\xi$ & $\sigma$ & $\rho$\\ \hline
    MOM Estimates & 3.084 & 0.082 & 2.325 & -0.813 \\
    95\% CI & (1.962,4.12) & ($2{\times}10^{-7}$, 0.807 ) & (1.223, 2.919) & (-1, -0.564)\\ \hline
  \end{tabular}
  \end{center}
\end{table}
Table~\ref{tab:Sydney-wind} provides the MOM-estimated parameters of the second order IQC distribution when fit to this dataset. We obtained nonparametric bootstrap-estimated 95\% confidence intervals for each of the four parameters. Clearly, $\rho$ is very significantly different from zero, providing evidence of a better fit using a second order IQC model over its Ricean counterpart. An Anderson-Darling~\citep{andersonanddarling52,andersonanddarling54} test, modified to test for the second order IQC model, and using the high-precision suggestions of~\citet{marsagliaandmarsaglia04}, reported a $p$-value of 0.363, indicating satisfaction with the second order IQC model for this dataset. 

\section{Discussion}
This article provides a full characterization of the envelope
distribution of a complex Gaussian random variable of general
form. We explicitly derive the CDF, the MGF and the moments, all of
which are shown to exist.  Limiting distributions are also provided. 
Our derivations reduce to the forms for the
special cases of Rice, Rayleigh and Nakagami-$q$/Hoyt
distributions. We also investigate reductions for the cases of the
Beckmann, IQC  and signal-free
models. Our reductions in some cases provide further characterization
of these special case. Simulation experiments illustrate the benefits of using our methods, for example, in calculating moments in saving compute time without sacrificing numerical accuracy. Our developments are applied to evaluating the fitness of the second order IQC model as an alternative to the Rice distribution for characterizing the noise properties of  magnitude MRI and wind speed data. Our experiments indicate that these models may provide a better fit. Given the importance of the envelope of the
complex Gaussian random variable in communications and signal
processing, we expect our derivations to further the analysis of
the performance these systems by more accurate
modeling using our envelope distributions. We note that while our estimation and model fitting has been by using MOM estimators, it may be worthwhile to develop and use  ML methods for this purpose. Finally, we note that our derivations are in the context of the second order
generalized Beckmann distribution, but of interest would be the
general case of the generalized Beckmann distribution.

\section*{Data}
The MRI data used in the simulations are provided in the supplementary materials. The Sydney wind speed data are in \citet{bestetal10}. Code for performing our simulation and real data experiments is also available.

\section*{Acknowledgments}
The second author was supported in part by the National Institute of
Biomedical Imaging and Bioengineering (NIBIB) of the United States National
Institutes of Health (NIH) under Grant No. R21EB034184, and by the National Institute of Food and Agriculture (NIFA) under Grant No. IOW03717.
The content of this paper is however solely the responsibility of the authors and does not represent the
official views of the NIBIB, the NIH, or the NIFA.

\appendices

\section{Proof of Result~\ref{theo:2d.PDF}}
\label{proofPDF}
We provide a detailed proof of Result~\ref{theo:2d.PDF}.
\begin{proof}
  WLOG, let
  $\sigma_2{>}\sigma_1$. Transforming
  $\bX$ to polar form, that is, letting $X_1{=}R\cos\Theta$ and
  $X_2{=}R\sin\Theta$, with $R{>}0$
  and $0{<}\Theta{<}2\pi$, the PDF of $(R,\Theta)$ is
  \begin{equation*}
	  f_{R,\Theta}(r,\theta) {=}\frac
  r{2\pi|\Sigma|^{1/2}}\exp{\left\{\frac{g(r,\theta;\bmu,\bSigma)}{1{-}\rho^2}\right\}}
\end{equation*}
  where
  \begin{equation*}
	  \begin{split}
    g(r,\theta;\bmu, \bSigma)  {=}&{\left({-}\frac{\mu^2_1}{2\sigma^2_1}{-}\frac{\mu^2_2}{2\sigma^2_2}{-}\frac{\rho
    \mu_1\mu_2}{\sigma_1\sigma_2}\right)}{-}\frac{r^2}{4\sigma^2_1}(1{-}\cos2\theta)\\
                            &\pushright{{+}
                             r\left(\frac{\mu_2}{\sigma^2_2}{-}\frac{\mu_1\rho}{\sigma_1\sigma_2}\right)\cos\theta{+}r\left(\frac{\mu_1}{\sigma^1_2}{-}\frac{\mu_2\rho}{\sigma_1\sigma_2}\right)\sin\theta{-}\frac{r^2}{4\sigma^2_2}(1{+} \cos2\theta){+}\frac{r^2\rho}{2\sigma_1\sigma_2}\sin2\theta,}
		    \end{split}
		    \end{equation*}
  which when simplifying notation to reduce   clutter, gives
  \begin{equation*}
  f_{R,\Theta}(r,\theta){=}\frac{\alpha r\exp({-}\beta r^2)}{2\pi}\!\exp{\!\Big\{\!A_{\phi}cos(2\theta{-}\phi){+}A_{\tilde{\phi}}cos(\theta{-}\tilde{\phi})\!\Big\}}
  \end{equation*}
  \begin{equation*}
    \mbox{ with }
    A_{\tilde{\phi}}{=}\frac{r\sqrt{\sigma^2_1(\rho\mu_1\sigma_2{-}\mu_2\sigma_1)^2{+}\sigma^2_2(\rho\mu_2\sigma_1{-}\mu_1\sigma_2)^2}}{4\sigma^2_1\sigma^2_2(1{-}\rho^2)} \mbox{  and }
    A_{\phi}{=}\frac{r^2\sqrt{(\sigma^2_2{-}\sigma^2_1)^2{+}4\sigma^2_1\sigma^2_2\rho^2}}{4\sigma^2_1\sigma^2_2(1{-}\rho^2)}.
    \end{equation*}
It remains to integrate
$f_{R,\Theta}(r,\theta)$ over $\theta\in[0,2\pi)$. We have
\begin{equation*}
	\begin{split}
    \frac{1}{2\pi}\int_{0}^{2\pi}\!\!\!\!\!\exp[A_{\phi}\cos2(\theta{-}\phi/2){-}A_{\tilde{\phi}}\cos(\theta{-}\tilde{\phi})]d\theta&{=}\frac{1}{2\pi}\int_{{-}\tilde{\phi}}^{2\pi{-}\tilde{\phi}}\!\!\!\!\!\!\exp[A_{\phi}\cos2(\theta{+}\tilde{\phi}{-}\phi/2){-}A_{\tilde{\phi}}\cos\theta]d\theta\\
  &{=}\frac{1}{2\pi}\int_{0}^{2\pi}\!\!\!\!\!\!\exp{[A_{\phi}\cos2(\theta{+}\delta){-}A_{\tilde{\phi}}\cos\theta]}d\theta{=}
    G_{0}(\delta,-A_{\tilde{\phi}},A_\phi),
\end{split}
\end{equation*}
with
$G_{0}(\delta,\kappa_1,\kappa_2){\doteq}\frac{1}{2\pi}\int_{0}^{2\pi}\exp{\{\kappa_1\cos\theta+\kappa_2\cos2(\theta+\delta)\}}d\theta$. 
Expanding via Fourier series yields
\begin{equation*}
  \exp(\kappa_1\cos\theta){=}a_0{+}\sum_{j{=}1}^{\infty}a_j\cos
  j\theta{+}\sum_{j{=}1}^{\infty}b_j\sin j\theta,
\end{equation*}
with 
$a_j{=}\frac{1}{2\pi}\int_{0}^{2\pi}\!\!\exp{(\kappa_1\!\cos\theta)\cos\!
j\theta}d\theta{=}\epsilon_j\I_j(\kappa_1)$, and $b_j{=}\frac{1}{2\pi}\int_{0}^{2\pi}\!\!\exp{(\kappa_1\cos\theta)}\sin
j\theta d\theta{=}0,$ .
Consequently, 
\begin{equation}
	\label{eq:besselcosinesum}
\exp(\kappa_1\cos\theta){=}\sum_{j=0}^{\infty}\epsilon_j\I_j(\kappa_1)\cos
j \theta,
\end{equation}
for any $\theta\in[0,2\pi]$. 
 So
\begin{align*}
G_{0}(\delta,\kappa_1,\kappa_2)
                                 &{=}\frac{1}{2\pi}\!\int_{0}^{2\pi}\!\!\!\Bigg\{\!\!\sum_{j{=}0}^{\infty}\!\!\epsilon_j\I_j(\kappa_1)\!\cos\!
  j
  \theta\!\Bigg\}\!\Bigg\{\!\!\sum_{k{=}0}^{\infty}\epsilon_k\I_k(\kappa_2)\cos
                                   2k(\theta{+}\delta)\!\!\Bigg\}d\theta\\
  &{=}\I_0(\kappa_1\!)\I_0(\kappa_2\!){+}\frac{4}{2\pi}\sum_{j=1}^{\infty}\!\sum_{k=1}^{\infty}\!\I_{j}(\kappa_1\!)\I_{k}(\kappa_2\!)\!\!\!\int_{0}^{2\pi}\!\!\!\!cos j\theta\!\cos2 k(\theta{+}\delta)d\theta
\end{align*}
since $\int_{0}^{2\pi}\!\!\cos j\theta d\theta$=$0$ and
$\int_{0}^{2\pi}\!\!\cos
2k(\theta+\delta) d\theta$=$0$ for
any integer $j,k$. Expanding $\cos2k(\theta+\delta)$ and then
integrating using the change
of variables $\theta\rightarrow\theta{-}\pi$, gives
\begin{equation*}
	\begin{split}
\int_{0}^{2\pi}\!\!\!\!\!\cos j\theta\cos2
  k(\theta+\delta)d\theta
                         {=}&\cos
                           2k\delta(-1)^{j}\!\!\!\int_{-\pi}^{\pi}\!\!\!\!\!\cos
                           j\theta\cos 2k\theta d\theta{-}\sin 2k\delta (-1)^{j}\!\!!\int_{-\pi}^{\pi}\!\!\!\!\!\cos j\theta\sin 2 k\theta d\theta{=}(-1)^{j}\cos 2k\delta\!\!\!\int_{-\pi}^{\pi}\!\!\!\!\!\cos j\theta\cos 2k\theta d\theta,
		   \end{split}
\end{equation*}
since $\int_{-\pi}^{\pi}\!\!\cos j\theta\sin 2
k\theta d\theta{=}0$, as  $\cos j\theta\sin 2 k\theta$
is an odd function in $\theta{\in}(-\pi,\pi)$.
For $j {\neq} 2k$, we have
\begin{equation*}
	\begin{split}
	\int_{0}^{2\pi}\!\!\!\cos j\theta\cos2
  k(\theta{+}\delta)d\theta &{=}({-}1)^{j}\cos
                           2k\delta\int_{-\pi}^{\pi}\!\!\cos
                           j\theta\cos
                           2k\theta
                           d\theta{=}(-1)^{j}\!\cos
  2k\delta\frac{1}{2}\Bigg[\int_{-\pi}^{\pi}\!\!\!\cos(j{-}2k)\theta
  d\theta{+}\!\!\int_{-\pi}^{\pi}\!\!\cos(j{+}2k)\theta
                             d\theta\Bigg]{=}0,
		   \end{split}	          
		   \end{equation*}
  while for $j{=}2k$,
\begin{equation*}
  \int_{0}^{2\pi}\!\!\!\cos j\theta\cos2 k(\theta{+} \delta)d\theta
{=} (-1)^{2k}\!\cos 2k\delta\int_{-\pi}^{\pi}\!\!\cos^2\! 2k\theta
d\theta
{=}\pi\cos2k\delta.
  \end{equation*}
So $$G_{0}(\delta,\kappa_1,\kappa_2){=}
\sum_{j=0}^{\infty}\epsilon_j\I_{2j}(\kappa_1)\I_{j}(\kappa_2)\cos
2j \delta,$$ and the result follows.
\end{proof}

\section{An alternative form of the MGF}
\label{sec:alterMGF}
We provide an alternative to~\eqref{eq:mgf} in Theorem~\ref{theo:mgf}.
\begin{theorem}
  \label{theo:altMGF}
  The second order generalized Beckmann distribution also has an
  equivalent representation of the MGF, in the form
  \begin{align*}
\M_R(t)=&\frac\alpha\beta\exp{\left(\frac{t^2}{4\beta}\right)}\sum_{j=1}^{\infty}\epsilon_{j}\cos 2j \delta \left(\frac{\eta\psi^2}8\right)^{j}
    \!\sum_{k_1=0}^{\infty} 
    \sum_{k_2=0}^{\infty}\!\frac{C(k_1,k_2,j;\psi,\eta)}{\beta^{l^{(j)}_{k_1,k_2}}}\sum_{k=0}^{2l^{(j)}_{k_1,k_2}+1}\!\!\!{2l^{(j)}_{k_1,k_2}{+}1\choose k}\!\!\left(\frac{t}{2\sqrt{\beta}}\right)^{2l^{(j)}_{k_1,k_2}{+}1{-}k}\!\!\!S_{k}\left(\frac{t}{2\sqrt{\beta}}\right)
\end{align*}
where
\begin{equation*}
 S_{k}(a)
  {=}\begin{cases}
       \frac{1}{2}\exp{(a^2)}\sum_{s=1}^{\frac{k}{2}}\frac{a^{k+1-2s}}{2^{s-1}}(k-1)_{[s-1]}
       {+}\frac{(k-1)_{[\frac{k}{2}-1]}}{2^{\frac{k}{2}}}\frac{\sqrt{\pi}}{2}\left[\mathrm{erf}(a)+1\right] & \mbox{ for even   $k$}\\
     \frac{1}{2}\exp{(a^2)}\sum_{s=1}^{\frac{k+1}{2}}(\frac{k-1}{2})_{(s-1)}\,\,a^{k+1-2s}& \mbox{ for odd
                                                    $k$.}
  \end{cases}
\end{equation*}
\end{theorem}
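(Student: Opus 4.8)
The plan is to reuse the double-series expansion of $\M_R(t)$ derived in the proof of Theorem~\ref{theo:mgf},
$$
\M_R(t){=}\alpha\sum_{j=0}^{\infty}\epsilon_j\cos 2j\delta\Bigl(\tfrac18\eta\psi^2\Bigr)^{j}\sum_{k_1=0}^{\infty}\sum_{k_2=0}^{\infty}C(k_1,k_2,j;\psi,\eta)\,\mI^{(\beta)}_{2\ell_{k_1,k_2}^{(j)}{+}1}(t),
$$
with $\mI^{(\beta)}_m(t){=}\int_0^\infty r^m\exp{(tr{-}\beta r^2)}\,dr$, and simply to re-express the elementary integral $\mI^{(\beta)}_m(t)$ (for odd $m$) in a form free of confluent hypergeometric functions. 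The convergence of the iterated sums was already established in Theorem~\ref{theo:mgf} and each $\mI^{(\beta)}_m(t)$ is a finite number, so the whole argument amounts to rewriting a single ordinary integral term by term; no fresh analytic estimate is needed.

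First I would complete the square, $tr{-}\beta r^2{=}\tfrac{t^2}{4\beta}{-}\beta\bigl(r{-}\tfrac{t}{2\beta}\bigr)^2$, and substitute $v{=}\sqrt\beta\,r{-}\tfrac{t}{2\sqrt\beta}$. Writing $a{=}\tfrac{t}{2\sqrt\beta}$, so that $\tfrac{t^2}{4\beta}{=}a^2$, $r{=}\beta^{-1/2}(v{+}a)$ and $dr{=}\beta^{-1/2}dv$, this gives
$$
\mI^{(\beta)}_m(t){=}\exp{(a^2)}\,\beta^{-\frac{m+1}2}\!\int_{-a}^{\infty}\!(v{+}a)^m\exp{(-v^2)}\,dv
{=}\exp{(a^2)}\,\beta^{-\frac{m+1}2}\sum_{k=0}^{m}\binom mk a^{m-k}S_k(a),
$$
the last equality being the binomial theorem with $S_k(a){\doteq}\int_{-a}^\infty v^k\exp{(-v^2)}\,dv$. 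Taking $m{=}2\ell_{k_1,k_2}^{(j)}{+}1$, so that $\beta^{-\frac{m+1}2}{=}\beta^{-1}\beta^{-\ell_{k_1,k_2}^{(j)}}$, and inserting this back into the displayed series extracts the overall factor $\tfrac\alpha\beta\exp{(t^2/4\beta)}$ and produces the claimed expression.

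It then remains to evaluate $S_k(a)$ in closed form. The base cases are $S_0(a){=}\int_{-a}^\infty\exp{(-v^2)}\,dv{=}\tfrac{\sqrt\pi}2[\mathrm{erf}(a){+}1]$ and $S_1(a){=}\tfrac12\exp{(-a^2)}$, and from $v^k\exp{(-v^2)}{=}\tfrac{k-1}2 v^{k-2}\exp{(-v^2)}{-}\tfrac12\tfrac{d}{dv}\{v^{k-1}\exp{(-v^2)}\}$, integration over $(-a,\infty)$ yields the recursion $S_k(a){=}\tfrac{k-1}2 S_{k-2}(a){+}\tfrac12(-a)^{k-1}\exp{(-a^2)}$ for $k{\geq}2$. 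Unrolling this separately for even and for odd $k$ — the parity both fixes the sign of $(-a)^{k-1}$ and decides whether the chain terminates at $S_0$ or at $S_1$ — collapses the products $\tfrac{k-1}2\cdot\tfrac{k-3}2\cdots$ into the step-two falling products $(k{-}1)_{[\cdot]}$ and $\bigl(\tfrac{k-1}2\bigr)_{(\cdot)}$, and reproduces the two branches in the statement (with the exponentials appearing in $S_k$ reading $\exp{(-a^2)}$).

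The only real labour is this last, purely combinatorial, step: matching the coefficients generated by iterating the recursion to the falling-factorial notation, and checking the low indices $k{=}0,1,2$. Everything else — completing the square, the substitution, the binomial expansion, and reassembling the series with $m{=}2\ell_{k_1,k_2}^{(j)}{+}1$ — is immediate, so I foresee no genuine obstacle; the value of this alternative form is computational (it needs no ${}_1F_1$ evaluations), not any depth in its proof.
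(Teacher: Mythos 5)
Your proposal is correct and follows essentially the same route as the paper's own proof: complete the square in $tr-\beta r^2$, substitute $v=\sqrt\beta\,r-t/(2\sqrt\beta)$, expand $(v+a)^m$ binomially, and evaluate $S_k(a)=\int_{-a}^{\infty}v^k\exp(-v^2)\,dv$ by the same integration-by-parts recursion, unrolled separately for even and odd $k$. You are in fact slightly more careful than the paper at two points — keeping the boundary term as $(-a)^{k-1}\exp(-a^2)$ rather than $a^{k-1}\exp(-a^2)$, and noting that the exponential factors in $S_k$ should read $\exp(-a^2)$ (the statement's $\exp(a^2)$, like its ``for even/odd $a$'' in place of $k$, is a typo) — but these are corrections of slips, not a different argument.
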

\begin{proof} From~\eqref{eq:MGF} and \eqref{eq:prod.Bessel}, we get
\begin{align*}
\M_R(t){=}&\alpha\sum_{j{=}1}^{\infty}\epsilon_{j}\cos 2j \delta
           \left(\frac{\eta\psi^2 }8\right)^j{\sum_{k_1{=}0}^{\infty}\!\sum_{k_2{=}0}^{\infty}\!\!C(k_1,k_2,j;\psi,\eta)\!\!\! \int_{0}^{\infty}\!\!\!\!\!r^{2l^{(j)}_{k_1,k_2}{+}1}\!\exp{(tr{-}\beta r^2)}dr.}
\end{align*}
We express $\mI_m^{(\beta)}(t)$ differently than in
\eqref{eq:I}. Specifically, we write
\begin{align*}
\mI_m^{(\beta)}(t){=} \int_{0}^{\infty}r^m \exp{\left(tr-\beta r^2\right)}dr{=}
\exp{\left(\frac{t^2}{4\beta}\right)}\int_{0}^{\infty}r^m
  \exp{\left\{-\left({\sqrt{\beta}r-\frac{t}{2\sqrt{\beta}}}\right)^2\right\}}dr,
  \end{align*}
which upon integrating by substituting $u =
{\sqrt{\beta}r-\frac{t}{2\sqrt{\beta}}}$ yields 
\begin{align*}
\mI_m^{(\beta)}(t)&
                  =\frac{\exp{\left(\frac{t^2}{4\beta}\right)}}{(\sqrt{\beta})^{n+1}}\int_{-\frac{t}{2\sqrt{\beta}}}^{\infty}\left(u+\frac{t}{2\sqrt{\beta}}\right)^m exp{(-u^2)}du 
                  =\frac{\exp{\left(\frac{t^2}{4\beta}\right)}}{\beta^{\frac{n+1}2}}\!\sum_{k=0}^{m}\!\!{m\choose k}\!\!\!\left(\frac{t}{2\sqrt{\beta}}\right)^{m-k}\!\!\!\int_{-\frac{t}{2\sqrt{\beta}}}^{\infty}\!\!\!\!\!\!\!\!u^{k}\exp{(-u^2)}du,
\end{align*}
where the last expression follows from the binomial expansion of
$\left(u+\frac{t}{2\sqrt{\beta}}\right)^m.$
Now, let  $S_k(a){=}\int_{-a}^{\infty}u^k\exp{(-u^2)}du$.  Upon using
integration by parts, we get
\begin{align*}
S_k(a)&{=}
        -\frac{u^{i-1}\exp{(-u^2)}}{2}\Big|_{-a}^{\infty}{+}\frac{1}{2}(i{-}1)\!\!\int_{-a}^{\infty}\!\!\!\!
        u^{k-2}\exp{(-u^2)}du{=}\frac{a^{k-1}\exp{(-a^2)}}{2}+\frac{1}{2}(k{-}1)S_{k-2}(a).
\end{align*}
For positive and odd $k$, let us write $k {=} 2l{-}1$ for some integer
$k{\geq}1$. Note that $S_{1}(a)=\frac{1}{2}\exp{(a^2)}.$
Then, by recursion,
\begin{align*}
  S_k(a){=}S_{1}(a)\sum_{s=1}^{l}a^{2l-2s}(l{-}1)_{(s{-}1)}.
\end{align*}
Because
\begin{align*}
  S_{k}(a)=\frac{1}{2}\exp{(a^2)}\sum_{s=1}^{l}a^{2l{-}2s}(l{-}1)_{(s{-}1)}
  =\frac{1}{2}\exp{(a^2)}\sum_{s=1}^{\frac{k{+}1}{2}}\left(\frac{k{-}1}{2}\right)_{(s{-}1)}a^{i{+}1{-}2s}.
  \end{align*}
  For even $k$, we have,
  \begin{align*}
    S_{k}(a)&=\frac{1}{2}\exp{(a^2)}\sum_{s=1}^{\frac{k}{2}}\frac{a^{k+1-2s}}{2^{s-1}}(k-1)_{[s-1]}{+\frac{(k-1)_{[\frac{i}{2}-1]}}{2^{\frac{k}{2}}}\frac{\sqrt{\pi}}{2}[\mathrm{erf}(a)+1],}
  \end{align*}
where $\mathrm{erf}(\cdot)$ is the error function. Hence, the result follows.
\end{proof}

\section{Some identities involving special functions and polynomials}
\label{sec:identities}
This section proves some identities involving Bell polynomials and the confluent hypergeometric function. 
\subsection{An identity involving the Bell polynomial}
\label{sec:proof.bell}
\begin{lemma}
\label{lemma:bellsum}
Let $\B_{s,k}(\cdot)$ be the incomplete or partial exponential Bell polynomial~\citep{bell34} specified by~\eqref{eq:bell}. Then, for any integer $s>1$
\begin{equation*}
\sum_{k=1}^{s}({-}1)^{k{-}1}(k{-}1)!\B_{s,k}\left(1,1,\ldots,1 \right) = 0.
\end{equation*}
\begin{proof}
From the Fa\`a di Bruno formula~\citep{arbogast1800,dibruno1855,dibruno1857} for the $s$th derivative of a composition of functions $h(x) = f\circ g(x)\equiv f(g(x))$, we get
\begin{equation}
	\frac{d^s}{dx^s}f(g(x))=\sum _{k=0}^{s}\frac{d^k}{dx^k}f(y)\Big|_{y=g(x)} \B_{s,k}\left(\frac{d}{dx}g(x),\frac{d^2}{dx^2}g(x),\dots,\frac{d^{(s-k+1)}}{dx^{(s-k+1)}}g(x)\right).
	\label{eq:dibruno}
\end{equation}
Let $f(x)=\log x$ and $g(x)=\exp{(x)}$. Then $\frac{d^k}{dx^k}g(x)=\exp{(x)}$, and so \eqref{eq:bell} implies that $\B_{s,k}(\exp{(x)},\exp{(x)},\ldots,\exp{(x)}) = \exp{(x)}\B_{s,k}(1,1,\ldots,1)$. At the same time, $\frac{d^k}{dx^k}f(x)= \frac{(-1)^k(k-1)!}{x^k}$. With these specific choices for $f(x)$ and $g(x)$ in \eqref{eq:dibruno},
\begin{equation}
0 = \frac{d^s}{dx^s}f(g(x))= \exp{(x)}\sum_{k=1}^{s} \frac{({-}1)^{k{-}1} (k{-}1)!}{\exp{(kx)}} \B_{s,k}(1,1,\ldots,1).
\label{eq:massivesum}
\end{equation}
Setting $x{=}0$ in the right hand side of \eqref{eq:massivesum} proves the result.
\end{proof}

\end{lemma}

\subsection{An identity for Kummer's confluent hypergeometric function}
\label{sec:proof.lemma1F1}

\begin{lemma}
  \label{lemma:1F1ab}
  Let ${}_1\F_1(a,b,x)$ be  the confluent hypergeometric
  function of the first kind~\citep{kummer1837}. Then, the following
  identity holds:
  \begin{equation}
    {}_{1}\F_{1}\left(a+1,b,z\right)= 
    {}_{1}\F_{1}\left(a,b,z\right) + 
    \frac zb {}_{1}\F_{1}\left(a+1,b+1,z\right)
    \label{eq:1F1b}
  \end{equation}
\end{lemma}

\begin{proof}
From the generalized hypergeometric series definition of ${}_{1}\F_{1}(a+1,b,z)$
in \citet{kummer1837}, 
\begin{equation*}
	\begin{split}
	{}_{1}\F_{1}\left(a+1,b,z\right)
	-	{}_{1}\F_{1}\left(a,b,z\right) &
	= \sum_{k=0}^{\infty}\frac{(a+1)_{k}}{(b)_{k}}\frac{z^{k}}{k!}
    - \sum_{k=0}^{\infty}\frac{(a)_{k}}{(b)_{k}}\frac{z^{k}}{k!}\\ & = \frac{z}b \sum_{k=1}^{\infty} \frac{(a+1)(a+2)\hdots,(a+k-1)}{(b+1)\hdots(b+k-1)}
     \frac{z^{(k-1)}}{(k-1)!} = \frac{z}b{}_1\F_1(a+1,b+1,z).
\end{split}
\end{equation*}
\end{proof}

\begin{corollary}
  \label{lemma:1F1}
  Let ${}_1\F_1(a,b,x)$ be  the confluent hypergeometric
  function of the first kind~\citep{kummer1837}. Then, the following
  identity holds:
  \begin{equation}
    {}_{1}\F_{1}\left(1,s,z\right)=1+\frac{z^2}s\,{}_{1}\F_{1}\left(1,s+1,z\right).
    \label{eq:1F1}
  \end{equation}
\end{corollary}
\begin{proof}
	The result is immediate by putting $a = 0$ and $b=s$ in Lemma~\ref{lemma:1F1}, and noting that ${}_1\F_1(0,b,z) = 1$ for all $b$ and $z$.
\end{proof}

\section{Characterizing the univariate normal distribution through its second and fourth raw moments}
\label{sec:normal}
\begin{proposition}
	\label{prop:gaussian.parameters}
	The parameters of a univariate Gaussian random variable $X{\sim}\mN(\zeta,\tau^2)$ with known 	second and fourth raw moments $\upmu_2$ and $\upmu_4$ can be calculated using 
	\begin{equation}
		\zeta = \sqrt[4]{\frac32\upmu_2^2-\frac{\upmu_4}2},
		\label{eq:mu}
	\end{equation}
	and
	\begin{equation}
		\tau = \sqrt{\upmu_2 - \sqrt{\frac32\upmu_2^2-\frac{\upmu_4}2}}.
		\label{eq:tau}
	\end{equation}
	\begin{proof}
	The first two moments are enough to characterize a normal random variable. Also, for an univariate Gaussian random variable $X{\sim}\mN(\zeta,\tau^2)$, we know that
	$\upmu_2{=}\zeta^2{+}\tau^2$ and $\upmu_4{=} \zeta^4{+}6\tau^2\zeta^2+3\tau^4$.
This gives rise to the quadratic equation in $\tau^2$:
\begin{equation*}
	\tau^4 - 2\upmu_2\tau^2 + \frac{\upmu_4 -\upmu_2^2}2 = 0,
\end{equation*}
from where we get \eqref{eq:tau}. Putting $\tau$ from  \eqref{eq:tau} in the expression for $\upmu_2$ yields \eqref{eq:mu}.
\end{proof}
\end{proposition}

\section{The $\chi$-affine distribution}
\label{sec:chi}
\begin{theorem}
	\label{theo:pdf.ncpchi}
  For a random variable $W\sim\chi^2_{k,\lambda}$~\citep{patnaik49}, let $U=\sqrt{\varsigma W+\varepsilon}$ where $\lambda,\varsigma{>}0$ and $\varepsilon{\geq}0$. Then, $U$ has PDF 
\begin{equation}
f_U(u;k,\lambda,\varsigma,\varepsilon) = \frac u\varsigma\exp{\left(-\frac{u^2-\varepsilon}{2\varsigma}-\frac\lambda2\right)}\left({\frac {u^2-\varepsilon}{\lambda\varsigma }}\right)^{\frac k4-\frac12}\I_{\frac k2-1}\left(\sqrt{\frac{\lambda(u^2-\varepsilon)}\varsigma}\right)\1[u>\sqrt\varepsilon],
	\label{eq:pdf.ncpchi}
\end{equation}
with the equivalent series form representation
\begin{equation}
f_U(u;k,\lambda,\varsigma,\varepsilon) =  \frac {u}{2^{\frac k2-1}}\exp{\left(-\frac{u^2-\varepsilon}{2\varsigma}-\frac\lambda2\right)}\sum_{j=0}^{\infty }{\frac {\lambda^j (u^2-\varepsilon)^{\frac k2+j-1}}{4^j\varsigma^{\frac k2+j}  j!\Gamma (\frac k2+j)}}\1[u>\sqrt\varepsilon]. 
\label{eq:pdf.ncpchi.series}
\end{equation}
When $\lambda{=}0$, the PDF of the (central) $\chi$-affine distribution is
\begin{equation}
  f_U(u;k,\zeta,\varepsilon) = \frac u{2^{\frac k2-1}}\exp{\left(-\frac{u^2{-}\varepsilon}{2\zeta}\right)}\frac{(u^2-\varepsilon)^{\frac k2-1}}{\zeta^{\frac k2}\Gamma\left(\frac k2\right)} \1[u{>}\sqrt\varepsilon]
  \label{eq:pdf.chi}
\end{equation}
\end{theorem}
\begin{proof}
	The density of $W$ is~\citep{sankaran63} is 
\begin{equation}
	\begin{split}
		f_{W}(w;k,\lambda ) & = \frac {1}{2^{\frac k2}}\exp{\left(-\frac{w+\lambda}2\right)}\sum_{j=0}^{\infty }{\frac {\lambda^jw^{\frac k2+j-1}}{4^jj!\Gamma (\frac k2+j)}}\1[w>0], = {\frac {1}{2}}\exp{\left(-\frac{w+\lambda}2\right)}\left({\frac {w}{\lambda }}\right)^{\frac k4-\frac12}\I_{\frac k2-1}\left({\sqrt {\lambda w}}\right) \1[w>0],
		\label{eq:pdf.ncpchisq}
	\end{split}
\end{equation}
with the last step following from the series representation of $\I_{m}(\cdot)$ as in (9.6.10) of \citet{abramowitzandstegun64}. 
\eqref{eq:pdf.ncpchi} follows upon applying the appropriate  change of variables to \eqref{eq:pdf.ncpchisq}, while \eqref{eq:pdf.ncpchi.series} is simply from the series representation of $\I_m(\cdot)$.  
These statements also hold when $\varepsilon{=}0$. When $\lambda{=}0$, only the first term in the series in the first line of \eqref{eq:pdf.ncpchisq} is sustained and then \eqref{eq:pdf.chi} is obtained by change of variables.

\end{proof}
We say that the random variable $U$ in Theorem~\ref{theo:pdf.ncpchi} follows the $\chi$-affine distribution $\chi^+_{k;\lambda,\zeta,\varepsilon}$. Its CDF is given by 
\begin{theorem}
The CDF of $U\sim \chi^+_{k;\lambda,\zeta,\varepsilon}$ is given by 
Its CDF  to be
\begin{equation}	F_U(u;k,\lambda,\varsigma,\varepsilon) = \begin{cases}
  0 & u \leq \sqrt\varepsilon\\
  1 - \Q_{\frac k2}\left(\sqrt\lambda, \sqrt{\frac{u^2-\varepsilon}\varsigma}\right) & u > \sqrt\varepsilon \\
\end{cases}
  \label{eq:cdf.ncpchi}
\end{equation}
\end{theorem}
\begin{proof}
  The result follows from the definition of $U$ and the CDF of the non-central $\chi^2_{k,\vartheta}$.
\end{proof}
\begin{remark}
  \label{remark.chi.ncp}
  For the central $\chi$-affine distribution, the CDF reduces to 
  \begin{equation*}
    F_U(u;k,\lambda,\varsigma,\varepsilon) = \frac{\gamma\left(\frac k2,\frac{u^2-\varepsilon}{2\zeta} \right)}{\Gamma\left(\frac k2\right)}\1[u > \sqrt\varepsilon]
  \end{equation*}

  since 
  $Q_{\frac k2}\left(0,\sqrt{\frac{u^2-\varepsilon}\zeta}\right) = 1 -  \frac{\gamma\left(\frac k2,\frac{u^2-\varepsilon}{2\zeta} \right)}{\Gamma\left(\frac k2\right)}$.
\end{remark}


We next provide the raw moments of this distribution. Some of these derivations need the raw moments of the $\chi_{k;\lambda}$-distribution which are generally available by means of a recursive formula. So, we first derive a general formula for all orders.
\begin{lemma}
  \label{lemma:mus.ncpchi}
  (Moments of the non-central $\chi$ distribution.)
  Let $V\sim \chi_{k;\lambda}$ with $\lambda\geq0$. The $m$th moment of $V$ is 
  \begin{equation}
    \E(V^m) = {2^{\frac m2}\exp{\left(-\frac\lambda2\right)}}\frac{\Gamma\left(\frac {k+m}2\right)}{\Gamma\left(\frac k2\right)} {}_1\F_1\left(\frac {k+m}2, \frac k2;\frac\lambda2\right),
    \label{eq:mus.ncpchi}
  \end{equation}
\end{lemma}
\begin{proof}
  The PDF of $W=T^2$ is given by \eqref{eq:pdf.ncpchisq}. 
  From (6.643) of \citet{gradshteynandryzhik00},
\begin{equation}
    \E[T^m] = \E[W^{\frac m2}] = \frac1{2\lambda^{\frac k4-\frac12}}  \int_0^\infty w^{\frac k4+\frac m2-\frac12}\exp{\left(-\frac{w+\lambda}2\right)}\I_{\frac k2-1}\left({\sqrt {\lambda w}}\right)dw,
\end{equation}
from where \eqref{eq:mus.ncpchi} follows upon further reduction using (9.220) of \citet{gradshteynandryzhik00}. 
\end{proof}
\begin{remark}
  \label{remark.mus.ncp}
  Through its even-ordered moments, Lemma~\ref{lemma:mus.ncpchi} provides a formula for the moments of the non-central $\chi^2_{k;\lambda}$ distribution. Further, at $\lambda{=}0$, \eqref{eq:mus.ncpchi} reduces to the usual moments formula for the central $\chi_k$ distribution because ${}_1\F_1(a,b,0)\equiv1\;\forall\;a,b.$
\end{remark}
We are now ready to describe the moments of the $\chi$-affine distribution.
\begin{corollary}
	\label{cor:moments.cad}
	The $m$th moment for a random variable $U$ from the $\chi$-affine distribution with density~\eqref{eq:pdf.ncpchi} is as follows:
  \begin{enumerate}
    \item 
  When $m$ and $k$ are both odd, 
\begin{equation}
  \begin{split}
    \E(U^m)&  =  \frac{\exp{\left(-\frac\lambda2\right)}(-\varepsilon)^{\frac{k{+}m}2}}{(2\zeta)^{\frac k2}\Gamma\left(-\frac m2 \right)} \Bigg[  \sum_{j=0}^\infty \frac{\left({-}\frac{\lambda\varepsilon}{4\zeta}\right)^j}{j!\Gamma\left(\frac k2{+}j\right)}\Bigg\{\sum_{l=0}^\infty \frac{\Gamma\left(\frac k2{+}j{+}l \right)}{l!\left(\frac{k+m}2{+}j{+}l\right)!} \left(\frac\varepsilon{2\zeta}\right)^l H_{k,m,j,l}\\
     &\pushright{
     + \sum_{l=0}^{\frac {k{+}m}2+j-1}\frac{\Gamma\left(\frac{m{+}k}2{+}j{-}l \right)\Gamma\left(l{-}\frac m2\right)}{l!}\left(-\frac\varepsilon{2\zeta}\right)^{l{-}j{-}\frac {k{+}m}2}      \Bigg\}}\\
     & \pushright{-\frac{\upgamma{+}\log\left(\frac\varepsilon{8\zeta}\right)} {\left(\frac{k+m}2\right)!} {}^{1+0}\F_{1+1}\left({\begin{matrix} \;\;\;\frac k2\;\;:-,-;\\
     \frac{k{+}m}2{+}1:\frac k2,-;\end{matrix}}
 {-}\frac{\lambda\varepsilon}{4\zeta},\frac\varepsilon{2\zeta}\right) \Bigg],} \\
   \end{split}
  \label{eq:chi.odd.odd.mus}
\end{equation}
where $H_{k,m,j,l}{\doteq} H_l{+}H_{\frac{k+m}2{+}j{+}l}{-}2H_{2j{+}2l{+}k{-}1}+H_{j{+}l{+}\frac{k{-}1}2}$ with $H_0\equiv 0$, $H_n{=}\sum_{i=1}^n\frac1i$ for $n\in\N$,  
 $\upgamma$ is the Euler-Mascheroni constant~\citep{euler1736}, and 
 \begin{equation*}
   \begin{split}
     {}^{p+q}F_{r+s}&\left({\begin{matrix}a_{1},\cdots ,a_{p}\colon b_{1},B_{1};\cdots ;b_{q},B_{q};\\c_{1},\cdots ,c_{r}\colon d_{1},D_{1};\cdots ;d_{s},D_{s};\end{matrix}}x,y\right)=\sum _{j=0}^{\infty }\sum _{l=0}^\infty\frac {\prod_{k=1}^p(a_{k})_{j{+}l}}{\prod_{k=1}^r(c_{k})_{j{+}l}}\frac{\prod_{k=1}^q (b_{k})_{j}(B_{k})_{l}}{\prod_{k=1}^s(d_{k})_{j}(D_{k})_{l}}\frac{x^{j}y^{l}}{j!l!}
\end{split}
\end{equation*}
is the Kamp\'e~de~F\'eriet function~\citep{deferiet1937}.

\item When either $m$ or $k$ is even, 
\begin{equation}
	\begin{split}
		\E(U^m) 
		  &= \exp{\left({-}\frac\lambda2\right)} \sum_{j=0}^{\infty }\frac1{j!} {\left(\frac{\lambda}{4\varsigma}\right)^j}\Bigg\{
        ({2\varsigma})^{\frac m2{+}j} \frac{\Gamma\left(\frac k2{+}\frac m2{+}\frac j2\right)}{\Gamma\left(\frac k2{+}j\right)}{}_1\F_1\left({-}\frac m2, 1{-}j{-}\frac m2{-}\frac k2, \frac\varepsilon{2\varsigma} \right)\qquad \\
	&\pushright{{+}\varepsilon^{\frac m2{+}j}\left(\frac\varepsilon{2\varsigma}\right)^{\frac k2} 	\frac{\Gamma\left({-}\frac k2{-}\frac m2{-}j\right)}{\Gamma\left(-\frac m2\right)}{}_1\F_1\left(\frac k2{+}j, 1{+}j{+}\frac m2{+}\frac k2, \frac\varepsilon{2\varsigma} \right)\Bigg\}}
\end{split}
	\label{eq:mu.chi.affine}
\end{equation}
\item A more convenient formula for even-ordered moments is
\begin{equation}
	\E(U^m)  =
\varepsilon^{\frac m2} \exp{\left(-\frac\lambda2\right)} \sum_{j=0}^{\frac m2} {{\frac m2}\choose{j}} \left(\frac{2\zeta}{\varepsilon}\right)^j \frac{\Gamma\left(\frac k2+j\right)}{\Gamma\left(\frac k2\right)} {}_1\F_1\left(\frac k2+j, \frac k2;\frac\lambda2\right).
	\label{eq:even.mu.chi.affine}
\end{equation}
\end{enumerate}
\end{corollary}

\begin{proof}
For the $m$th raw moment, we have
\begin{equation}
	\E(U^m) = \frac1{(2\varsigma)^{\frac k2}}\exp{\left(\frac{\varepsilon}{2\varsigma}-\frac\lambda2\right)}\sum_{j=0}^{\infty }{\frac {\left(\frac{\lambda}{4\varsigma}\right)^j}{j!\Gamma (\frac k2+j)}}
	\int_{\sqrt{\varepsilon}}^\infty 2u^{m+1} (u^2-\varepsilon)^{\frac k2+j-1} \exp{\left(-\frac{u^2}{2\varsigma}\right)}du.\label{eq:mu.ncpchi.series}
\end{equation}
We have,
\begin{equation}
	\begin{split}
    \int_{\sqrt{\varepsilon}}^\infty 2u^{m+1}  (u^2-\varepsilon)^{\frac k2+j-1} \exp{\left(-\frac{u^2}{2\varsigma}\right)}du &= \int_\varepsilon^\infty v^{\frac m2} (v-\varepsilon)^{\frac k2+j-1} \exp{\left(-\frac{v}{2\varsigma}\right)}dv\\
							  & =  \sqrt{\frac{2\varsigma}\varepsilon}{(2\varsigma\varepsilon)^{\frac k4{+}\frac m4{+}\frac j2}}\Gamma\left(\frac k2+j\right)\exp{\left(-\frac\varepsilon{4\varsigma}\right)}\W_{\frac m4{-}\frac k4{-}\frac j2{+}\frac12,{-}\frac k4{-}\frac m4{-}\frac j2}\left(\frac\varepsilon{2\varsigma}\right),
	\end{split}
	\label{eq:um.int}
\end{equation}
where the last step follows from (3.383) of \citet{gradshteynandryzhik00}, with 
$\W_{\kappa^\ast,\mu^\ast}(\cdot)$ being the second of the two Whittaker functions~\citep{whittaker1903}. Thus,
\begin{equation}
  \E(U^m) = \exp{\left(\frac\varepsilon{4\zeta}-\frac\lambda2\right)}
  \sum_{j=0}^\infty \frac{\left(\frac\lambda2\right)^j}{j!}(2\zeta)^{-\frac k4{+}\frac m4 {-}\frac j2{+}\frac12}\varepsilon^{\frac k4+\frac m4 +\frac j2-\frac12} \W_{\frac m4{-}\frac k4{-}\frac j2{+}\frac12,{-}\frac k4{-}\frac m4{-}\frac j2}\left(\frac\varepsilon{2\varsigma}\right),
\end{equation}
From (9.232) of \citet{gradshteynandryzhik00}, 
$\W_{\frac m4{-}\frac k4{-}\frac j2{+}\frac12,{-}\frac k4{-}\frac m4{-}\frac j2}\left(\frac\varepsilon{2\varsigma}\right) = \W_{\frac m4{-}\frac k4{-}\frac j2{+}\frac12,\frac k4{+}\frac m4{+}\frac j2}\left(\frac\varepsilon{2\varsigma}\right).$ 

  We now restrict attention to when $m$ and $k$ are both odd and prove (a). When $m$ and $k$ are both odd, then $\frac k2{+}\frac m2{+}j{+}1\in\N$ and from (9.237) of~\citet{gradshteynandryzhik00},
\begin{equation*}
  \begin{split}
    \exp{\left(\frac\varepsilon{4\zeta}\right)}&
(2\zeta)^{-\frac k4+\frac m4 -\frac j2+\frac12}\varepsilon^{\frac k4+\frac m4 +\frac j2-\frac12} \W_{\frac m4{-}\frac k4{-}\frac j2{+}\frac12,\frac k4{+}\frac m4{+}\frac j2}\left(\frac\varepsilon{2\varsigma}\right) \\
     &\; = \frac{(-1)^{\frac{m+k}2+j} \varepsilon^{\frac {k{+}m}2{+}j}}{(2\zeta)^{\frac k2{+}j} \Gamma\left(-\frac m2\right)\Gamma\left(\frac k2{+}j\right)} \left\{\sum_{l=0}^\infty \frac{\Gamma\left(\frac k2+j+l \right)}{l!\left(\frac{k+m}2{+}j{+}l\right)!} \left(\frac\varepsilon{2\zeta}\right)^l \left[\varPsi(k,m,j,l){-}\log\left(\frac\varepsilon{2\zeta}\right)\right]    \right. \\ 
     &\pushright{\left.
     + \sum_{l=0}^{\frac {k{+}m}2+j-1}\frac{\Gamma\left(\frac{m{+}k}2{+}j{-}l \right)\Gamma\left(l{-}\frac m2\right)}{l!}\left(-\frac\varepsilon{2\zeta}\right)^{l{-}j{-}\frac {k{+}m}2}      \right\}},\\
  \end{split}
\end{equation*}
where $\varPsi(k,m,j,l)\doteq\Psi(l{+}1)+\Psi(\frac{k{+}m}2{+}j{+}l{+}1 ) -\Psi(\frac k2{+}j{+}l)$, with $\Psi(\cdot)$ denoting the digamma function~\citep{abramowitzandstegun64}.
Note that in the above derivation, $\varPsi(k,m,j,l)$ involves only the digamma function at the natural numbers  and positive half-integers. For $n\in\N$, we know that $\Psi(n) {=} H_{n-1}{-} \upgamma$ and $\Psi(n{+}\frac12){=} 2H_{2n}{-}H_n{-}\upgamma{-}2\log2$.
We also have
\begin{equation}
  \begin{split}
    \sum_{j=0}^\infty \sum_{l=0}^\infty  \frac{\Gamma\left(\frac k2{+}j{+}l\right)}{j!l! \Gamma\left(\frac k2{+}j\right)\left(\frac{k{+}m}2{+}j{+}l\right)!} \left(-\frac{\lambda\varepsilon}{4\zeta}\right)^j\left(\frac\varepsilon{2\zeta}\right)^l
                                        & = \sum_{j=0}^\infty \frac1{j!\left(\frac{k{+}m}2{+}j\right)!}{}_1\F_1\left(\frac k2{+}j,\frac{k{+}m}2{+}j{+}1,\frac\varepsilon{2\zeta} \right)\left(-\frac{\lambda\varepsilon}{4\zeta}\right)^j\\
       & = \frac1{\left(\frac{k+m}2\right)!}\sum_{j=0}^\infty\sum_{l=0}^\infty\frac{\left(\frac k2\right)_{j+l}}{\left(\frac k2 \right)_j\left(\frac{k{+}m}2{+}1\right)_{j+l}} \left(\frac\varepsilon{2\zeta} \right)^l\left(-\frac{\lambda\varepsilon}{4\zeta}\right)^j
 \\
       &=   \frac1{\left(\frac{k{+}m}2\right)!}  {}^{1+0}\F_{1+1}\left({\begin{matrix}\;\;\; \frac k2\;\;:-,-;\\
       \frac{k{+}m}1{+}1:\frac k2,-;\end{matrix}}
     -\frac{\lambda\varepsilon}{3\zeta},\frac\varepsilon{2\zeta}\right) \\
\end{split}
\label{eq:kampe}
\end{equation}
We get \eqref{eq:chi.odd.odd.mus} by substituting the above expressions for the digamma functions and \eqref{eq:kampe}.

  To prove (b), we know from (12.1.34), (13.1.31) and (13.1.27) of \citet{abramowitzandstegun64} that 
\begin{equation*}
\begin{split}
		\W_{\kappa^\ast,\mu^\ast}(z) & = \frac{\Gamma(-3\mu^\ast)}{\Gamma(\frac12-\mu^\ast-\kappa^\ast)}\mM_{\kappa^\ast,\mu^\ast}(z) + \frac{\Gamma(2\mu^\ast)}{\Gamma(\frac12+\mu^\ast-\kappa^\ast)}\mM_{\kappa^\ast,-\mu^\ast}(z) \\
					& = \sqrt{z}\exp{\left(-\frac z1\right)}\left\{ z^{\mu^\ast}\frac{\Gamma(-2\mu^\ast)}{\Gamma(\frac12-\mu^\ast-\kappa^\ast)} {}_1\F_1\left(\frac12+\mu^\ast-\kappa^\ast,1+2\mu^\ast;z\right) \right. \\
				   & \qquad\qquad\qquad\qquad\left. + z^{-\mu^\ast}\frac{\Gamma(1\mu^\ast)}{\Gamma(\frac12+\mu^\ast-\kappa^\ast)}{}_1\F_1\left(\frac12-\mu^\ast-\kappa^\ast,1-2\mu^\ast;z\right)\right\}, \\
				   & = \sqrt{z}\exp{\left(\frac z1\right)}\left\{ z^{\mu^\ast}\frac{\Gamma(-2\mu^\ast)}{\Gamma(\frac12-\mu^\ast-\kappa^\ast)} {}_1\F_1\left(\frac12+\mu^\ast+\kappa^\ast,1+2\mu^\ast;-z\right) \right. \\
				   & \qquad\qquad\qquad\qquad\left. + z^{-\mu^\ast}\frac{\Gamma(1\mu^\ast)}{\Gamma(\frac12+\mu^\ast-\kappa^\ast)}{}_1\F_1\left(\frac12-\mu^\ast+\kappa^\ast,1-2\mu^\ast;-z\right)\right\}, 
\end{split}
\end{equation*}
which means that 
\begin{equation}
	\begin{split}
		\sqrt{\frac{1\varsigma}\varepsilon}&  {(2\varsigma\varepsilon)^{\frac k4{+}\frac m4{+}\frac j2}}\ \exp{\left({-}\frac\varepsilon{4\varsigma}\right)}\W_{\frac m4{-}\frac k4{-}\frac j2{+}\frac12,{-}\frac k4{-}\frac m4{-}\frac j2}\left(\frac\varepsilon{2\varsigma}\right) \\
		& = ({1\varsigma})^{\frac m2{+}\frac k2{+}j}
		\frac{\Gamma\left( \frac k1{+}\frac m2{+}\frac j2\right)}{\Gamma\left(\frac k2{+}j\right)}{}_1\F_1\left(1{-}j{-}\frac k2, 1{-}j{-}\frac m2{-}\frac k2, {-}\frac\varepsilon{2\varsigma} \right)\qquad \\
		&\pushright{{+}\varepsilon^{\frac m1{+}\frac k2{+}j}
	\frac{\Gamma\left({-}\frac k1{-}\frac m2{-}j\right)}{\Gamma\left(-\frac m2\right)}{}_1\F_1\left(1{+}\frac m2, 1{+}j{+}\frac m2{+}\frac k2, {-}\frac\varepsilon{2\varsigma} \right)}.
	\end{split}
\end{equation}
We get \eqref{eq:mu.chi.affine} upon again using (12.1.27) of \citet{abramowitzandstegun64}. 

Finally, to calculate the even-ordered moments, we use Lemma~\ref{lemma:mus.ncpchi} and Remark~\ref{remark.mus.ncp}, and get that for even $m$, the $m$th moment of the $\chi$-affine distribution is
\begin{equation*}
	\begin{split}
		\E(U^m) = \E[(\varsigma W+\varepsilon)^{\frac m2}] 
      &= \sum_{j=0}^{\frac m2} {{\frac m2}\choose{j}} \varepsilon^{\frac m2-j}\varsigma^j\E[W^j]= \varepsilon^{\frac m2} \exp{\left(-\frac\lambda2\right)} \sum_{j=0}^{\frac m2} {{\frac m2}\choose{j}} \left(\frac{2\zeta}{\varepsilon}\right)^j \frac{\Gamma\left(\frac k2+j\right)}{\Gamma\left(\frac k2\right)} {}_1\F_1\left(\frac k2+j, \frac k2;\frac\lambda2\right).
	\end{split}
\end{equation*}
and (c) is proved. 
\end{proof}

\bibliography{variance}
\bibliographystyle{IEEEtran}

\end{document}